\newcommand{\um}[1]{{\color{blue}\textit{#1}}}
\newcommand{\dneg}{{\sim}}
\newcommand{\DNEG}{{\ast}}
\newcolumntype{C}[1]{>{\centering\arraybackslash}p{#1}}
\newcolumntype{L}[1]{>{\arraybackslash}p{#1}}
\newtheorem{theorem}{Theorem}[section]
\newtheorem{corollary}[theorem]{Corollary}
\newtheorem{lemma}[theorem]{Lemma}
\newtheorem{proposition}[theorem]{Proposition}
\newtheorem{fact}[theorem]{Fact}
\newtheorem{definition}[theorem]{Definition}
\newtheorem{remark}[theorem]{Remark}
\def\fCenter{{\mbox{$\ \vdash\ $}}}
\newcommand{\fns}{\footnotesize}
\newcommand{\marginnote}[1]{\marginpar{\raggedright\tiny{#1}}}
\def\mc{\multicolumn}
\newcommand{\band}{\ensuremath{\otimes}\xspace}
\newcommand{\bor}{\ensuremath{\oplus}\xspace}
\newcommand{\btim}{\ensuremath{\times}\xspace}
\newcommand{\bplu}{\ensuremath{+}\xspace}
\newcommand{\bt}{\ensuremath{\texttt{t}}\xspace}
\newcommand{\bfb}{\ensuremath{\texttt{f}}\xspace}
\newcommand{\bkt}{\ensuremath{\top}\xspace}
\newcommand{\bkb}{\ensuremath{\bot}\xspace}
\newcommand{\BT}{\ensuremath{\check{\texttt{t}}}\xspace}
\newcommand{\utop}{\ensuremath{1_1}\xspace}
\newcommand{\ubot}{\ensuremath{0_1}\xspace}
\newcommand{\uand}{\sqcap_1}
\newcommand{\uor}{\sqcup_1}
\newcommand{\urarr}{\ensuremath{>}\xspace}
\newcommand{\ucrarr}{\ensuremath{\,{>\mkern-7mu\raisebox{-0.065ex}{\rule[0.5865ex]{1.38ex}{0.1ex}}}\,}\xspace}
\newcommand{\UTOP}{\ensuremath{\hat{1}_1}\xspace}
\newcommand{\UBOT}{\ensuremath{\check{0}_1}\xspace}
\newcommand{\UAND}{\ensuremath{\:\hat{\sqcap}_1\:}\xspace}
\newcommand{\UOR}{\ensuremath{\:\check{\sqcup}_1\:}\xspace}
\newcommand{\URARR}{\ensuremath{\:\check{\sqsupset}_1\:}\xspace}
\newcommand{\UCRARR}{\ensuremath{\:\hat{\sqsubset}_1\:}\xspace}
\newcommand{\dtop}{\ensuremath{1_2}\xspace}
\newcommand{\dbot}{\ensuremath{0_2}\xspace}
\newcommand{\dand}{\ensuremath{\sqcap_2}\xspace}
\newcommand{\dor}{\ensuremath{\sqcup_2}\xspace}
\newcommand{\drarr}{\ensuremath{\sqsupset_2}\xspace}
\newcommand{\dcrarr}{\ensuremath{\,{\sqsubset_2}\,}\xspace}
\newcommand{\DTOP}{\ensuremath{\hat{1}_2}\xspace}
\newcommand{\DBOT}{\ensuremath{\check{0}_2}\xspace}
\newcommand{\DAND}{\ensuremath{\:\hat{\sqcap}_2\:}\xspace}
\newcommand{\DOR}{\ensuremath{\:\check{\sqcup}_2\:}\xspace}
\newcommand{\DRARR}{\ensuremath{\:\check{\sqsupset}_2\:}\xspace}
\newcommand{\DCRARR}{\ensuremath{\hat{{\:{\sqsubset}_2\:}}}\xspace}
\newcommand{\bu}{\ensuremath{{\pi}_1}\xspace}
\newcommand{\bd}{\ensuremath{{\pi}_2}\xspace}
\newcommand{\ubtu}{\ensuremath{{\Diamond}_1}\xspace}
\newcommand{\dbtu}{\ensuremath{{\vartriangleright}_2}\xspace}
\newcommand{\ubtd}{\ensuremath{{\Box}_1}\xspace}
\newcommand{\dbtd}{\ensuremath{{\vartriangleleft}_2}\xspace}
\newcommand{\ubku}{\ensuremath{\boxdot_1}\xspace}
\newcommand{\dbku}{\ensuremath{\boxdot_2}\xspace}
\newcommand{\ubkd}{\ensuremath{\Diamonddot_1}\xspace}
\newcommand{\dbkd}{\ensuremath{\Diamonddot_2}\xspace}
\newcommand{\ud}{\ensuremath{\mathrm{n}}\xspace}
\newcommand{\du}{\ensuremath{\mathrm{p}}\xspace}
\newcommand{\ubu}{\ensuremath{\ell_1}\xspace}
\newcommand{\dbu}{\ensuremath{\ell_2}\xspace}
\newcommand{\BU}{\ensuremath{{\Pi}_1}\xspace}
\newcommand{\BD}{\ensuremath{{\Pi}_2}\xspace}
\newcommand{\UD}{\ensuremath{\mathrm{N}}\xspace}
\newcommand{\DU}{\ensuremath{\mathrm{P}}\xspace}
\newcommand{\drarrk}{\ensuremath{\,{>\mkern-7mu\raisebox{-0.065ex}{\rule[0.5865ex]{1.38ex}{0.1ex}}}\,}\xspace}
\title{Bilattice Logic Properly Displayed\thanks{This research is supported by the NWO Vidi grant 016.138.314, the NWO Aspasia grant 015.008.054, a Delft Technology Fellowship awarded to the third author in 2013, and the International Program Fund for Ph.D. candidates, Sun Yat-sen University 2016.}}
\author[1]{Giuseppe Greco}
\author[1,2]{ Fei Liang}
\author[1,3]{Alessandra Palmigiano}
\author[4]{Umberto Rivieccio}
\affil[1]{Delft University of Technology, the Netherlands}
\affil[2]{Institute of Logic and Cognition, Sun Yat-sen University, China}
\affil[3]{University of Johannesburg, South Africa}
\affil[4]{Federal University of Rio Grande do Norte, Brazil}
\date{}
\begin{document}

\maketitle

\begin{abstract}
We introduce a proper multi-type display calculus for bilattice logic (with conflation) for which we prove soundness, completeness, conservativity, standard subformula property and cut-elimination. Our proposal builds on the product representation of bilattices and applies the guidelines of the multi-type methodology in the design of display calculi. 

Keywords: Non-classical logics, bilattice logic, many-valued logics, substructural logics, algebraic proof theory, sequent calculi, cut elimination, display calculi, multi-type calculi.

2010 Math.~Subj.~Class.---03F52, 03F05, 03G10.
\end{abstract}


\section{Introduction}

Bilattices are algebraic structures introduced in \cite{ginsberg1988multivalued} in the context of a multivalued approach to deductive reasoning, and have subsequently found applications in a variety of areas in computer science and artificial intelligence.
The basic intuition behind 
the bilattice formalism, 
which can be traced back to the work of
Dunn and Belnap~\cite{dunn1966algebra, belnap1977computer, belnap1977useful}, is to carry out reasoning within a space of truth-values that results from expanding the classical set $\{\bfb, \bt \}$ with a value $\bot$, representing lack of information, and a value $\top$, representing over-defined or contradictory information.

During the last two decades, the theory of bilattices has been investigated in depth from a logical
and algebraic point of view:  complete (Hilbert- and Gentzen-style) presentations of bilattice-based logics   were introduced in  \cite{arieli1996reasoning, arieli1998value}, followed by \cite{bou2010logic
} which focuses on the implication-free reduct of the logic.
The calculi introduced in these papers have many common aspects with those considered e.g.~in~\cite{font1997belnap} for the Belnap-Dunn logic, of which bilattice logics are conservative expansions.

Negation plays a very special role, and it is in fact  
due to the negation connective that
 bilattice logics are not \emph{self-extensional} \cite{wojcicki1988referential} (or, as other authors say, \emph{congruential}
 ), i.e.~the inter-derivability
relation of the logic is not a congruence of the formula algebra. This means that there are formulas such that
$\varphi \dashv \vdash \psi $ and yet  $\neg \varphi \not \! \dashv \vdash \neg  \psi $ (which did not happen in the Belnap-Dunn logic that is indeed self-extensional). In the Gentzen-style calculus for bilattice logic  \emph{GBL} introduced in~\cite[Section 3.2]{arieli1996reasoning}, each binary connective is introduced via four different logical rules, two of which are standard, and introduce it as main connective on the left and on the right of the turnstyle, and two non-standard rules, which introduce the same connective under the scope of a negation. From a proof-theoretic perspective, this solution presents the disadvantage that the resulting calculus is not fully modular, does not support a proof-theoretic semantics, and does not enjoy the standard  subformula property.

In this paper we introduce a {\em proper multi-type display calculus} for bilattice logic that circumvents all the above-mentioned disadvantages\footnote{The notion of proper display calculus has been introduced in \cite{Wansing1998}. Properly displayable logics, i.e.~those which can be captured by some proper display calculus, have been characterized in a  purely proof-theoretic way in \cite{CiabattoniRamanayake2016}. In \cite{GMPTZ}, an alternative characterization of properly displayable logics was introduced which builds on the algebraic theory of unified correspondence \cite{CoGhPa14}.}. 
The design of our calculus follows the principles of the {\em multi-type} methodology introduced in \cite{GKPLori,Multitype,PDL,TrendsXIII} with the aim of displaying dynamic epistemic logic and propositional dynamic logic
and subsequently  applied 
to several other logics (
e.g.~linear logic with exponentials \cite{GP:linear}, inquisitive logic \cite{Inquisitive}, semi-De Morgan logic \cite{SDM}, lattice logic \cite{GrecoPalmigianoLatticeLogic}) which are not properly displayable in their single-type presentation, which also inspired the design of novel logics \cite{BGPTW}. Our multi-type syntactic presentation of bilattice logic
is based on the algebraic insight provided by the product representation theorems
(see e.g.~\cite{bou2011varieties}) and possesses all the 
desirable properties of {\em proper} display calculi. In particular, our calculus enjoys the standard subformula property, supports a proof-theoretic semantics and  is fully modular.

\paragraph{Structure of the paper}
In Section~\ref{sec:prel} we recall  basic definitions and results about bilattices and bilattice logics.
Section~\ref{sec:alg} presents an algebraic analysis of bilattices as heterogeneous structures which provides
a basis for our multi-type approach to their proof theory. 
Our display calculus is introduced in Section~\ref{sec:disp} where we also prove soundness, completeness, conservativity, subformula property and cut-elimination. In Section~\ref{sec:conc} we outline some directions for future work. 

\section{Preliminaries on bilattices}
\label{sec:prel}

The following definitions and results can be found e.g.~in~\cite{arieli1996reasoning, bou2010logic}.

  \begin{definition}
A {\em bilattice} is a structure $\mathbb{B} =$ ${(B, \leq_t, \leq_k,  \neg)}$ such that $B$ is a non-empty set
, $(B, \leq_t)$, $(B, \leq_k)$ are lattices, and $\neg$ is a unary operation on $B$ having the following properties:
\begin{itemize}
\item if $a \leq_t b$, then $\neg b \leq_t \neg a$,
\item if $a \leq_k b$, then $\neg a \leq_k \neg b$,
\item  $\neg\neg a =  a$.
\end{itemize}
\end{definition}
We use $\wedge, \vee$ for the lattice operations which correspond to $\leq_t$ and $\band, \bor$ for those that correspond to $\leq_k$.
If present, the lattice bounds of $\leq_t$ are denoted by 
$ \bfb$ and $\bt $ (minimum and maximum, respectively) and those of  $\leq_k$ by 
$\bot$ and $\top$.
The smallest non-trivial bilattice is the four-element one (called
\textbf{Four}) with universe $\{ \bfb, \bt, \bot, \top \}$.

\begin{fact}
The following equations (\emph{De Morgan laws for negation}) hold in any bilattice: 
\begin{center}
\begin{tabular}{lcl}
$\neg(x \wedge y) = \neg x \vee \neg y$,&  &$\neg(x \vee y) = \neg x \wedge \neg y $,\\
$\neg(x \band y) = \neg x \band \neg y$,&  &$\neg(x \bor y) = \neg x \bor \neg y $.\\
\end{tabular}
\end{center}
Moreover, if the bilattice is bounded, then $$\neg\bt = \bfb,\ \ \neg\bfb = \bt,\ \  \neg\top = \top,\ \ \neg\bot = \bot.$$ 
\end{fact}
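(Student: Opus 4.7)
The plan is to exploit the fact that $\neg$ is an involutive bijection on $B$, combined with its behaviour on the two orders, to show that $\neg$ is a dual isomorphism of $(B, \leq_t)$ and an ordinary isomorphism of $(B, \leq_k)$. From this, the four De Morgan laws follow immediately, since every (dual) lattice isomorphism preserves (reverses) existing meets and joins.

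Concretely, I would treat the $\leq_t$ laws first. Let $z = \neg x \vee \neg y$; then $\neg x \leq_t z$ and $\neg y \leq_t z$, so antitonicity of $\neg$ yields $\neg z \leq_t \neg\neg x = x$ and $\neg z \leq_t y$, hence $\neg z \leq_t x \wedge y$. Applying antitonicity once more together with $\neg\neg z = z$ gives $\neg(x \wedge y) \leq_t z = \neg x \vee \neg y$. The reverse inequality is direct from antitonicity applied to $x \wedge y \leq_t x, y$. Swapping $x,y$ with $\neg x, \neg y$ in the law just proved and using $\neg\neg = \mathrm{id}$ gives the dual law $\neg(x \vee y) = \neg x \wedge \neg y$ for free. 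The analogous argument with antitonicity replaced by $\leq_k$-monotonicity handles $\band$ and $\bor$, giving both $\leq_k$ De Morgan laws by the same template.

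For the bounded case, I would argue uniformly: given any $b \in B$, we have $\neg b \leq_t \bt$, so antitonicity gives $\neg \bt \leq_t \neg\neg b = b$, meaning $\neg \bt$ is $\leq_t$-minimum, i.e.\ $\neg \bt = \bfb$; applying $\neg$ to both sides gives $\bt = \neg \bfb$. For $\top$, from $b \leq_k \top$ we get $\neg b \leq_k \neg \top$ by $\leq_k$-monotonicity, so $\neg \top$ is $\leq_k$-maximum, forcing $\neg \top = \top$; the same argument at the other pole yields $\neg \bot = \bot$.

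I do not anticipate any real obstacle: the whole verification is just an application of the three defining properties of $\neg$, with involutivity used to promote the one-sided inequalities coming from (anti)monotonicity to equalities. The only point worth being careful about is not to confuse the two orders when invoking monotonicity versus antitonicity — writing the $\leq_t$ and $\leq_k$ arguments in parallel, as above, keeps this transparent.
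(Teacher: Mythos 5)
Your proof is correct: the observation that $\neg$ is an involutive bijection, hence a dual automorphism of $(B,\leq_t)$ and an automorphism of $(B,\leq_k)$, immediately yields all four De Morgan laws and the behaviour on the bounds, and each step you spell out (including the implicit use of surjectivity of $\neg$ when identifying $\neg\top$ as the $\leq_k$-maximum) checks out against the three defining properties of $\neg$. The paper states this as a Fact without proof, citing the standard references, so there is no in-paper argument to compare against; your derivation is the standard one found in that literature.
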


%
%

\begin{definition}
A bilattice is called {\em distributive} when all possible distributive laws concerning the four lattice operations, i.e., all identities of the following form, hold:
\[ x \circ (y \bullet z) \thickapprox (x \circ y) \bullet (x \circ z)  \quad\, \mathrm{for} \,\, \mathrm{all} \,\, \circ,\bullet \in \{\wedge, \vee, \band, \bor\} \]
\end{definition}

If a  distributive bilattice is  bounded, then $$\bt \band \bfb = \bot,\ \  \bt \bor \bfb = \top,\ \  \top \wedge \bot = \bfb,\ \  \top \vee \bot = \bt.$$


In the following, we use $\mathsf{B}$ to denote the class of bounded distributive bilattices.

\begin{theorem}[Representation of distributive bilattices
]
\label{th:rep}
Let $\mathbb{L}$ be a bounded distributive lattice with join $\sqcup$ and meet $\sqcap$. 
Then the algebra $\mathbb{L} \odot \mathbb{L}$ 
having as universe the direct product $L \times L $
is a distributive bilattice with the following operations:

\begin{center}
\begin{tabular}{rcl}
$\langle a_1, a_2\rangle \wedge \langle b_1, b_2 \rangle$&$:=$ & $\langle a_1 \sqcap b_1,a_2\sqcup b_2 \rangle$\\
$\langle a_1, a_2\rangle \vee \langle b_1, b_2\rangle$&$:=$ & $\langle a_1 \sqcup b_1,a_2\sqcap b_2 \rangle$\\
$\langle a_1, a_2 \rangle \otimes \langle b_1, b_2 \rangle$&$:=$ & $\langle a_1 \sqcap b_1,a_2\sqcap b_2 \rangle$\\
$\langle a_1, a_2\rangle \oplus \langle b_1, b_2 \rangle$&$:=$ & $\langle a_1 \sqcup b_1,a_2\sqcup b_2 \rangle$\\
$\neg \langle a_1, a_2\rangle $&$ :=$ &$\langle a_2, a_1 \rangle$\\
$\bfb$&$:=$ & $\langle 0,  1\rangle$\\
$\bt$&$:=$ & $\langle 1,  0 \rangle$\\
$\bot$&$:=$ & $\langle 0,  0\rangle$\\
$\top$&$:=$ & $\langle1,  1\rangle$
\end{tabular}
\end{center}
\end{theorem}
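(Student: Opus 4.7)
The plan is to verify the three clusters of properties from the definition of distributive bilattice, treating each as a direct coordinatewise computation on $L \times L$. First, I would read off the two induced orders from the prescribed meets and joins: the operations $\band, \bor$ are exactly the componentwise meet and join, so $\leq_k$ coincides with the product order $\langle a_1, a_2 \rangle \leq_k \langle b_1, b_2 \rangle \iff a_1 \sqsubseteq b_1 \text{ and } a_2 \sqsubseteq b_2$, while the operations $\wedge, \vee$ flip the second coordinate, giving $\langle a_1, a_2 \rangle \leq_t \langle b_1, b_2 \rangle \iff a_1 \sqsubseteq b_1 \text{ and } b_2 \sqsubseteq a_2$. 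In other words, $(L \times L, \band, \bor) = \mathbb{L} \times \mathbb{L}$ and $(L \times L, \wedge, \vee) \cong \mathbb{L} \times \mathbb{L}^{op}$. Both are bounded distributive lattices because the class of bounded distributive lattices is closed under direct products and under taking opposites, and the proposed bounds $\langle 0, 1 \rangle, \langle 1, 0 \rangle, \langle 0, 0 \rangle, \langle 1, 1 \rangle$ are easily checked to be the extrema of the corresponding orders.

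Next I would verify the three axioms on negation. Involutivity $\neg \neg \langle a_1, a_2 \rangle = \langle a_1, a_2 \rangle$ is immediate from $\neg \langle a_1, a_2 \rangle := \langle a_2, a_1 \rangle$. Anti-monotonicity with respect to $\leq_t$ and monotonicity with respect to $\leq_k$ both reduce, after unfolding the definitions of the two orders above, to the observation that swapping the components simultaneously flips the coordinate on which the order was reversed: if $a_1 \sqsubseteq b_1$ and $b_2 \sqsubseteq a_2$ then $b_2 \sqsubseteq a_2$ and $a_1 \sqsubseteq b_1$, which is exactly $\neg \langle b_1, b_2 \rangle \leq_t \neg \langle a_1, a_2 \rangle$, and similarly for $\leq_k$.

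For the distributivity clause I would note that all sixteen identities of the form $x \circ (y \bullet z) \approx (x \circ y) \bullet (x \circ z)$ with $\circ, \bullet \in \{\wedge, \vee, \band, \bor\}$ become, after unfolding, pairs of identities in $\mathbb{L}$ involving only $\sqcap$ and $\sqcup$; each of these holds because $\mathbb{L}$ is a distributive lattice (and so is $\mathbb{L}^{op}$). So rather than grinding through all sixteen, I would present one or two representative cases (say the mixed ones like $x \wedge (y \bor z)$ and $x \bor (y \wedge z)$, whose first coordinate is $a_1 \sqcap (b_1 \sqcup c_1) = (a_1 \sqcap b_1) \sqcup (a_1 \sqcap c_1)$ and whose second is $a_2 \sqcup (b_2 \sqcap c_2) = (a_2 \sqcup b_2) \sqcap (a_2 \sqcup c_2)$) and leave the remainder to the reader as entirely analogous.

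There is no real obstacle here: the proof is a routine componentwise verification whose only conceptual content is recognizing that $\mathbb{L} \odot \mathbb{L}$ is literally the product of $\mathbb{L}$ with itself under $\leq_k$ and the product of $\mathbb{L}$ with $\mathbb{L}^{op}$ under $\leq_t$, glued by the swap negation. The most error-prone point to get right on paper is simply keeping track of which coordinate is reversed under which order when verifying the De Morgan-style interactions; all sixteen distributive laws and all four De Morgan laws for $\neg$ come for free once the two order descriptions are fixed.
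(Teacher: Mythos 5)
The paper does not actually prove this theorem: it is quoted in the preliminaries as a known representation result, with the reader referred to the literature (Arieli--Avron and Bou--Rivieccio), so there is no in-paper argument to compare yours against. That said, your proposal is the standard proof and it is correct: identifying $(L\times L,\band,\bor)$ with $\mathbb{L}\times\mathbb{L}$ and $(L\times L,\wedge,\vee)$ with $\mathbb{L}\times\mathbb{L}^{op}$ immediately yields the two bounded distributive lattice reducts with the stated bounds, the three negation axioms follow from the coordinate swap exactly as you say, and every one of the sixteen distributivity identities unfolds coordinatewise into identities valid in any distributive lattice. One small slip worth fixing before writing it up: the representative computation you display, with first coordinate $a_1\sqcap(b_1\sqcup c_1)$ and second coordinate $a_2\sqcup(b_2\sqcap c_2)$, is the unfolding of $x\wedge(y\vee z)$ (the pure truth-lattice distributivity), not of the mixed case $x\wedge(y\bor z)$, whose second coordinate is $a_2\sqcup(b_2\sqcup c_2)$ and needs only semilattice identities rather than distributivity. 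This does not affect the argument, but if you advertise a case as ``mixed'' you should unfold the right one.
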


\begin{theorem}
Every distributive bilattice is isomorphic to $\mathbb{L} \odot \mathbb{L}$ for some distributive lattice $\mathbb{L}$.
\end{theorem}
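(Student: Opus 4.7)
The plan is to construct, from a given bounded distributive bilattice $\mathbb{B}$, a bounded distributive lattice $\mathbb{L}$ sitting inside $B$, and then exhibit an explicit isomorphism $\phi: \mathbb{B} \to \mathbb{L} \odot \mathbb{L}$ with its inverse. I would take
\[
L := \{x \in B \mid x \band \bt = x\},
\]
which is the $\leq_k$-interval $[\bot, \bt]$. Using the distributive law $(x \circ y) \band \bt = (x \band \bt) \circ (y \band \bt)$ for $\circ \in \{\wedge, \vee\}$, one shows that $L$ is closed under $\wedge$ and $\vee$; it contains $\bfb$ and $\bt$ as bounds. A short argument (using the distributive laws together with $\bt \band \bfb = \bot$ and $\bt \bor \bfb = \top$) then shows that on $L$ the operations $\wedge$ and $\band$ coincide, as do $\vee$ and $\bor$, so the structure $\mathbb{L} := (L, \wedge, \vee, \bfb, \bt)$ is a bounded distributive lattice.

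Second, I would define the candidate isomorphism and its inverse by
\[
\phi(x) := \langle x \band \bt,\ \neg x \band \bt \rangle, \qquad \psi(\langle a, b \rangle) := a \bor \neg b.
\]
Both maps are well-defined because $y \band \bt \in L$ for every $y \in B$. The round-trip identity $\psi(\phi(x)) = x$ reduces to
\[
(x \band \bt) \bor \neg(\neg x \band \bt) = (x \band \bt) \bor (x \band \bfb) = x \band (\bt \bor \bfb) = x \band \top = x,
\]
combining the De Morgan law $\neg(\neg x \band \bt) = x \band \bfb$, distributivity of $\band$ over $\bor$, the identity $\bt \bor \bfb = \top$, and the fact that $\top$ is the $\leq_k$-maximum. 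The identity $\phi(\psi(\langle a,b\rangle)) = \langle a, b \rangle$ is a symmetric computation that uses $a, b \in L$.

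Third, I would verify that $\phi$ is a bilattice homomorphism. Every verification has the same shape: in the first coordinate one applies a distributive law of the form $(x \circ y) \band \bt = (x \band \bt) \circ (y \band \bt)$, while in the second coordinate one combines the De Morgan law for $\circ$ with the same distributive law (or its dual). For instance, preservation of $\wedge$ reads
\[
\phi(x \wedge y) = \bigl\langle (x \band \bt) \wedge (y \band \bt),\ (\neg x \band \bt) \vee (\neg y \band \bt) \bigr\rangle = \phi(x) \wedge \phi(y),
\]
where the second coordinate uses $\neg(x \wedge y) = \neg x \vee \neg y$. Preservation of $\neg$ is immediate: $\phi(\neg x) = \langle \neg x \band \bt,\ x \band \bt \rangle$, i.e., $\neg$ swaps the two coordinates. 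Preservation of the constants and of $\vee, \band, \bor$ is analogous.

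The argument does not hinge on any single deep step but on the cumulative use of \emph{all} twelve distributive laws: each preservation check invokes a specific one of them, as do the closure and coincidence statements establishing $\mathbb{L}$. The main obstacle is therefore one of bookkeeping --- making sure that every equational step genuinely follows from the distributive-bilattice hypothesis, without silently invoking an identity (such as the interaction of the two orders on $L$) that does not yet belong to the toolkit. Once the auxiliary identity $x = (x \band \bt) \bor (x \band \bfb)$ and its $\neg$-image are isolated, the rest of the proof is a uniform, mechanical calculation.
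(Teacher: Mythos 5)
Your argument is essentially correct, but it follows the classical interval-based route for \emph{bounded} bilattices rather than the decomposition the paper actually relies on. The paper does not prove this theorem in-line: it cites the literature, and the construction it later uses (Section~3) is the regular-element representation of Bou and Rivieccio, where $\mathbb{L} = (\mathrm{Reg}(\mathbb{B}),\band,\bor)$ with $\mathrm{Reg}(\mathbb{B})=\{a\in B : \neg a = a\}$, the isomorphism being $a\mapsto\langle\mathrm{reg}(a),\mathrm{reg}(\neg a)\rangle$ for $\mathrm{reg}(a)=(a\vee(a\band\neg a))\bor\neg(a\vee(a\band\neg a))$. That construction makes no use of the lattice bounds and therefore proves the theorem as literally stated (``every distributive bilattice''); your construction takes $L=\{x : x\band\bt = x\}$ and $\phi(x)=\langle x\band\bt,\ \neg x\band\bt\rangle$, which genuinely needs $\bt$, $\bfb$, $\top$, $\bot$ (your round-trip computation hinges on $\bt\bor\bfb=\top$). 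Since the paper confines itself throughout to the class $\mathsf{B}$ of bounded distributive bilattices, this restriction is harmless for its purposes, but you should state it explicitly; what each approach buys is, respectively, generality (regular elements) versus a more elementary, purely computational verification (intervals).

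One local slip: $\bfb$ is \emph{not} an element of $L$ in general, since $\bfb\band\bt=\bot$, so $\bfb\in L$ would force $\bfb=\bot$. The $\leq_t$-bounds of $L=\{x : x\leq_k\bt\}$ are $\bot$ (minimum) and $\bt$ (maximum), so your lattice should be $(L,\wedge,\vee,\bot,\bt)$; correspondingly $\phi(\bfb)=\langle\bot,\bt\rangle$, which matches the clause $\bfb:=\langle 0,1\rangle$ of the product construction. With that correction, and granting the claim (true, but worth writing out, e.g.\ by showing $\leq_t$ and $\leq_k$ agree on $L$) that $\wedge$ coincides with $\band$ and $\vee$ with $\bor$ on $L$, the remaining computations are all correct.
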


 \begin{definition}
A  structure $\mathbb{B} = (B, \leq_t, \leq_k,  \neg, -)$ is a {\em bilattice with conflation} if the reduct $ (B, \leq_t, \leq_k,  \neg)$ is a bilattice and the \emph{conflation} $-: B \rightarrow B$ is an operation satisfying:
\begin{itemize}
\item if $a \leq_t b$, then $- a \leq_t - b$;
\item if $a \leq_k b$, then $- b \leq_k - a$;
\item $-- a =  a$.
\end{itemize}
We say that $\mathbb{B}$ is  {\em commutative} if it also satisfies the equation: $\neg - x = -\neg x$. 
\end{definition}

  \begin{fact}
The following equations 
(\emph{De Morgan laws for conflation})
 hold in any bilattice with conflation:
\begin{center}
\begin{tabular}{lcl}
$- (x \wedge y) = - x \wedge - y$&  &$- (x \vee y)  = - x \vee - y $ \\
$- (x \band y)  = - x \bor - y$&  &$- (x \bor y) = - x \band - y $ \\
\end{tabular}
\end{center}  
Moreover, if the bilattice is bounded, then $$-\bt = \bt,\ \  -\bfb = \bfb,\ \  -\top = \bot,\ \  -\bot = \top.$$
\end{fact}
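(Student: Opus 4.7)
The plan is to deduce all six equations purely from the defining properties of conflation, namely its $\leq_t$-monotonicity, its $\leq_k$-antitonicity, and the involution law $--a = a$, without invoking the representation theorem at all.

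The single fact that does the bulk of the work is this: a monotone involution on a poset is automatically an order-automorphism, while an antitone involution is automatically an order-anti-automorphism. In the monotone case, if $-a \leq_t -b$ then applying $-$ once more and using monotonicity yields $--a \leq_t --b$, which by involution reduces to $a \leq_t b$; so $-$ reflects $\leq_t$ as well as preserves it, and being its own inverse it is a bijection. The antitone case is the order-dual: on $(B, \leq_k)$, conflation is an order-isomorphism onto the $\leq_k$-dual of $B$. First I would record these two observations explicitly, as they drive the rest of the proof.

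Once this is in place, the four De Morgan equations follow at once. Order automorphisms preserve all existing meets and joins, so $-$ commutes with $\wedge$ and $\vee$, giving $-(x \wedge y) = -x \wedge -y$ and $-(x \vee y) = -x \vee -y$. Dual isomorphisms swap meets with joins, so $-$ interchanges \band and \bor, giving $-(x \band y) = -x \bor -y$ and $-(x \bor y) = -x \band -y$. The bounded case is the same argument applied to extrema: the $\leq_t$-automorphism fixes the $\leq_t$-bounds, so $-\bt = \bt$ and $-\bfb = \bfb$; the $\leq_k$-anti-automorphism swaps the $\leq_k$-bounds, so $-\top = \bot$ and $-\bot = \top$. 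There is no substantial obstacle: the argument is entirely standard, and the only care needed is bookkeeping, to keep straight which of the two orders governs which pair of operations and bounds, so that every equation comes out as preservation or as swap as appropriate.
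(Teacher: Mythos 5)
Your proof is correct and complete. The paper states this result as a Fact without proof, deferring to the cited literature, so there is no in-paper argument to compare against; your route is the standard one. The key observation — that an involution which is monotone (resp.\ antitone) with respect to a partial order is automatically an order-automorphism (resp.\ anti-automorphism), hence preserves (resp.\ swaps) all existing meets, joins, and bounds — is exactly what is needed, and your bookkeeping of which of $\leq_t$, $\leq_k$ governs $\wedge,\vee,\bt,\bfb$ versus $\band,\bor,\top,\bot$ is right: the first group is preserved, the second is dualized.
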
  

We denote by $\mathsf{CB}$ the class of bounded commutative  distributive bilattices with conflation.

\begin{theorem}
Let  $\mathbb{D} = (D, \sqcap, \sqcup, \dneg, 0, 1)$ be a De Morgan algebra, then $\mathbb{D} \odot \mathbb{D}$ is a bounded commutative  distributive bilattice with conflation where:
\begin{itemize}
\item $(D, \sqcap, \sqcup, 0, 1) \odot (D, \sqcap, \sqcup, 0, 1)$ is a bounded distributive bilattice;
\item $-(a, b) = (\dneg b, \dneg a)$;
\end{itemize}
\end{theorem}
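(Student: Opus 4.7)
The plan is to leverage Theorem~\ref{th:rep} as a black box for the bilattice reduct: since the $(\sqcap, \sqcup, 0, 1)$-reduct of $\mathbb{D}$ is a bounded distributive lattice, that theorem directly yields that the $(\wedge, \vee, \band, \bor, \neg, \bfb, \bt, \bot, \top)$-reduct of $\mathbb{D} \odot \mathbb{D}$ is a bounded distributive bilattice. It therefore remains only to verify that the operation $-$ defined by $-\langle a_1, a_2 \rangle := \langle \dneg a_2, \dneg a_1 \rangle$ satisfies the three defining axioms of a conflation and that the commutativity equation $\neg - x = -\neg x$ holds.

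For the three conflation axioms, I first unfold the product orders induced by the definitions of $\wedge$ and $\band$: $\langle a_1, a_2 \rangle \leq_t \langle b_1, b_2 \rangle$ iff $a_1 \leq b_1$ and $b_2 \leq a_2$, whereas $\langle a_1, a_2 \rangle \leq_k \langle b_1, b_2 \rangle$ iff $a_1 \leq b_1$ and $a_2 \leq b_2$. The $\leq_t$-monotonicity and $\leq_k$-antitonicity of $-$ then both reduce to the antitonicity of $\dneg$ on $\mathbb{D}$ (which itself follows from the De Morgan laws together with $\dneg \dneg x = x$): swapping the two coordinates exchanges the co- and contravariant slots, and applying $\dneg$ flips each inequality once more, so the required monotonicity patterns come out by direct inspection in both cases. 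Involutivity $--\langle a_1, a_2 \rangle = \langle a_1, a_2 \rangle$ is immediate from $\dneg\dneg x = x$.

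Commutativity is then a one-line calculation: $\neg - \langle a_1, a_2 \rangle = \neg \langle \dneg a_2, \dneg a_1 \rangle = \langle \dneg a_1, \dneg a_2 \rangle = -\langle a_2, a_1 \rangle = -\neg \langle a_1, a_2 \rangle$. No real obstacle arises, since each clause is a short symbolic check. The only point that deserves care is the order-reversal in the second coordinate of $\leq_t$, which is precisely what forces the coordinate-swap in the definition of $-$ and what lets a single use of the antitonicity of $\dneg$ discharge both monotonicity axioms at once.
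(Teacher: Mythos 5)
Your proof is correct and takes essentially the route the paper itself relies on: the theorem is quoted in the preliminaries without an explicit proof, and the substantive verification you give --- the bilattice reduct delegated to Theorem~\ref{th:rep}, the two monotonicity axioms for $-$ reduced to the antitonicity of $\dneg$ after unfolding the product orders $\leq_t$ and $\leq_k$, involutivity from $\dneg\dneg x = x$, and the one-line commutativity computation --- coincides with the verification the paper carries out in the proof of Proposition~\ref{prop:reverse engineering} for the more general heterogeneous setting. Your argument is exactly the special case of that proof with $\mathbb{L}_1 = \mathbb{L}_2 = \mathbb{D}$ and $\ud = \du = \mathrm{Id}$.
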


\begin{theorem}
Every bounded commutative distributive bilattice with conflation is isomorphic to $\mathbb{D} \odot \mathbb{D}$ for some De Morgan algebra 
$\mathbb{D}$.
\end{theorem}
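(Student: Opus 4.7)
The plan is to bootstrap off the representation theorem for distributive bilattices stated just above, so that only the extra structure given by the conflation needs to be analysed. Given a bounded commutative distributive bilattice with conflation $\mathbb{B} = (B, \leq_t, \leq_k, \neg, -)$, applying that theorem to the bilattice reduct yields a bounded distributive lattice $\mathbb{L}$ and an isomorphism $\varphi : \mathbb{B} \to \mathbb{L} \odot \mathbb{L}$ of bilattices. I would then transport the conflation along $\varphi$ to obtain an operation $-'$ on $\mathbb{L} \odot \mathbb{L}$ which still satisfies the conflation axioms and still commutes with $\neg$. The remaining task is to exhibit an antitone involution $\dneg$ on $\mathbb{L}$ such that $-'\langle a, b\rangle = \langle \dneg b, \dneg a\rangle$; then $\mathbb{L}$ is a De Morgan algebra and $\mathbb{B} \cong \mathbb{L} \odot \mathbb{L}$ in $\mathsf{CB}$ via $\varphi$.

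Writing $-'\langle a, b\rangle = \langle f(a,b), g(a,b)\rangle$, commutativity $\neg -' = -' \neg$ immediately forces $g(a,b) = f(b,a)$, so $-'\langle a, b\rangle = \langle f(a,b), f(b,a)\rangle$. The crux is to show that $f$ depends only on its second coordinate. For $a_1 \leq b_1$ in $\mathbb{L}$ and any $c \in L$, a direct inspection of the product definitions shows that $\langle a_1, c\rangle$ stands in \emph{both} the relation $\leq_t$ and the relation $\leq_k$ to $\langle b_1, c\rangle$. Then $t$-monotonicity of $-'$ forces $f(a_1, c) \leq f(b_1, c)$, while $k$-antitonicity forces $f(b_1, c) \leq f(a_1, c)$; hence these are equal whenever $a_1 \leq b_1$, and the usual argument through the join $a_1 \sqcup b_1$ promotes this to full independence from the first coordinate. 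Setting $\dneg c := f(a, c)$ for any fixed $a$ then gives $-'\langle a, b\rangle = \langle \dneg b, \dneg a\rangle$, as wanted.

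It then remains to verify that $\dneg$ is a De Morgan involution on $\mathbb{L}$. Involutivity is a direct consequence of $-'-' = \mathrm{id}$ read off coordinatewise; antitonicity follows from $t$-monotonicity of $-'$ applied to pairs differing only in their second coordinate; and the De Morgan laws for $\dneg$ with respect to $\sqcap$ and $\sqcup$ descend coordinatewise from the conflation De Morgan laws stated in the preceding Fact. Combined with the previous theorem (which ensures that $\mathbb{D} \odot \mathbb{D}$ lies in $\mathsf{CB}$ for any De Morgan algebra $\mathbb{D}$), this yields the desired isomorphism.

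The main obstacle I anticipate is the simultaneous use of $t$-monotonicity and $k$-antitonicity to pin down the shape of $f$: the argument is conceptually clean but relies on the specific interaction between the two orders in the product construction (where $\leq_t$ reverses the second coordinate while $\leq_k$ is pointwise), so one must be careful in setting up the inequalities correctly. Everything else is either transport along $\varphi$ or coordinatewise verification.
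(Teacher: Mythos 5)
Your proof is correct. The key computation checks out: in the product $\mathbb{L}\odot\mathbb{L}$ one has $\langle a_1,c\rangle\leq_t\langle b_1,c\rangle$ iff $a_1\leq b_1$ (the second coordinates compare trivially) and likewise $\langle a_1,c\rangle\leq_k\langle b_1,c\rangle$ iff $a_1\leq b_1$, so $t$-monotonicity and $k$-antitonicity of the transported conflation squeeze $f(a_1,c)=f(b_1,c)$ for comparable $a_1,b_1$, and the detour through $a_1\sqcup b_1$ removes the comparability hypothesis. Commutativity with $\neg$ is exactly what forces the second component to be $f(b,a)$, and the remaining verifications (involutivity, antitonicity, the De Morgan laws --- which in fact come for free once $\dneg$ is an antitone involution on a lattice) are routine, as you say.

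The paper itself gives no proof of this theorem (it is quoted from the literature), but the route it implicitly relies on, visible in Section~3, is different from yours: there one takes $\mathbb{D}$ to be the set $\mathrm{Reg}(\mathbb{B})=\{a: a=\neg a\}$ of regular elements with the operations $\band,\bor$ and the \emph{restriction} of $-$, using commutativity to show that $\mathrm{Reg}(\mathbb{B})$ is closed under conflation (if $a=\neg a$ then $\neg(-a)=-\neg a=-a$), and then checks that the already-known isomorphism $\pi(a)=\langle\mathrm{reg}(a),\mathrm{reg}(\neg a)\rangle$ intertwines $-$ with $\langle a,b\rangle\mapsto\langle\dneg b,\dneg a\rangle$. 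That approach buys an explicit, canonical description of $\mathbb{D}$ inside $\mathbb{B}$, which the paper later exploits in Proposition~3.5. Your approach instead treats the representation of the reduct as a black box and deduces the \emph{shape} of any conflation on a product bilattice purely from its monotonicity and commutativity axioms; this is arguably cleaner as a proof of the bare isomorphism statement and has the side benefit of showing that every commutative conflation on $\mathbb{L}\odot\mathbb{L}$ arises from a De Morgan negation on $\mathbb{L}$. Both arguments are sound; yours is a legitimate and somewhat more structural alternative.
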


\subsection*{A calculus for bilattice logic}
The language of bilattice logic $\mathcal{L}$ over a denumerable set $\mathsf{AtProp} = \{p, q, r,\ldots\}$ of atomic propositions is generated as follows:
$$A ::= p \mid \bt \mid \bfb \mid \bkt \mid \bkb \mid \neg A \mid A \wedge A \mid A \vee A \mid A \band A \mid A \bor A,$$
the language of bilattice logic with conflation also includes the conflation formula $- A$.

The calculus
for bilattice logic 
BL consists of the following axioms: 
$$A \fCenter A,\quad \neg\neg A \dashv\vdash A,$$
$$\bfb \fCenter A,\ \  A \fCenter \bt,\ \  \bkb \fCenter A,\ \  A \fCenter \bkt,$$
$$A \fCenter \neg\bfb,\ \  \neg\bt \fCenter A,\ \  \neg\bkb \fCenter A,\ \  A \fCenter \neg\bkt,$$
$$A \wedge B \fCenter A,\ \  A\wedge B \fCenter B,\ \  A \fCenter A \vee B,\ \  B \fCenter A \vee B,$$
$$A \band B \fCenter A,\ \  A\band B \fCenter B,\ \  A \fCenter A \bor B,\  B \fCenter A \bor B,$$
$$A \wedge (B \vee C) \fCenter (A \wedge B) \vee (A \wedge C),$$
$$A \band (B \bor C) \fCenter (A \band B) \vee (A \bor C),$$
$$\neg(A \wedge B) \dashv\vdash \neg A \vee \neg B,\ \  \neg(A \vee B) \dashv\vdash \neg A \wedge \neg B,$$
$$\neg(A \band B) \dashv\vdash \neg A \band \neg B,\ \  \neg(A \bor B) \dashv\vdash \neg A \bor \neg B,$$
and the following rules:
\begin{center}
\begin{tabular}{cc}
\mc{2}{c}{
\AX $A \fCenter B$
\AX $B \fCenter C$
\BI  $A \fCenter C$ 
\DP}
 \\
 & \\
\AX $A \fCenter B$
\AX $A \fCenter C$
\BI  $A \fCenter B \wedge C$ 
\DP
 & 
\AX $A \fCenter B$
\AX $C \fCenter B$
\BI  $A \vee C \fCenter B$ 
\DP
 \\

 & \\

\AX $A \fCenter B$
\AX $A \fCenter C$
\BI  $A \fCenter B \band C$ 
\DP
 & 
\AX $A \fCenter B$
\AX $C \fCenter B$
\BI  $A \bor C \fCenter B$ 
\DP
 \\

\end{tabular}
\end{center}

The calculus
for bilattice logic with conflation CBL consists of the axioms and rules of BL plus the following axioms:
$$-- A \dashv\vdash A, \ \ -\neg A \dashv\vdash \neg- A,$$
$$-\bfb \fCenter A,\ \  A \fCenter -\bt,\ \  -\bkt \fCenter A,\ \  A \fCenter -\bkb,$$
$$-(A \wedge B) \dashv\vdash - A \wedge - B,\ \  -(A \vee B) \dashv\vdash - A \vee - B,$$
$$-(A \band B) \dashv\vdash - A \bor - B,\ \  -(A \bor B) \dashv\vdash - A \band - B.$$
The algebraic semantics of BL (resp.~CBL) is given by $\mathsf{B}$ (resp.~$\mathsf{CB}$).  We use $A \vDash_{\mathsf{B}} C$ (resp.~$A \vDash_{\mathsf{CB}} C$) to mean: for any $\mathbb{B} \in \mathsf{B}$ (resp.~$\mathbb{B} \in \mathsf{CB}$), if 
$A^{\mathbb{B}} \in F_{\bt} $
then 
$C^{\mathbb{B}} \in  F_{\bt} $.
Here $A^{\mathbb{B}},  C^{\mathbb{B}}$ mean the interpretations of $A$ and $C$ in $\mathbb{B}$, respectively;
and $F_{\bt} = \{ a \in B : \bt \leq_k a  \}$ is the set of designated elements of $\mathbb{B}$ 
(using the terminology of \cite[Definition 2.13]{arieli1996reasoning},  $F_{\bt}$ is the \emph{least bifilter} of  $\mathbb{B}$).

Soundness of BL (resp.~CBL) is straightforward. In order to show  completeness, we can prove that every axiom and rule of 
Arieli and Avron's
$\mathit{GBL}$ (resp.~$\mathit{GBS}$, cf.~\cite{arieli1996reasoning}) is derivable in BL (resp.~CBL)\footnote{In 
order to do this, we view a sequent $\Gamma \Rightarrow \Delta$ of  $\mathit{GBL}$ ($\mathit{GBS}$)
as the equivalent sequent $\bigwedge\Gamma \Rightarrow \bigvee\Delta$.}.
Then the completeness of BL (resp.~CBL) follows  from the completeness of $\mathit{GBL}$ (resp.~$\mathit{GBS}$, \cite[Theorem 3.7]{arieli1996reasoning}).

 \begin{theorem}[Completeness]
\label{thm:com}
$A \vdash_\mathrm{BL} C$ iff $A \vDash_{\mathsf{B}} C$ (resp.~$A \vdash_\mathrm{CBL} C$ iff $A \vDash_{\mathsf{CB}} C$).
\end{theorem}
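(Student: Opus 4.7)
The plan is to follow the two-step strategy sketched in the preceding paragraph: establish \emph{soundness} by a direct verification that every axiom and rule of BL (resp.\ CBL) holds in the intended algebraic semantics, and then reduce \emph{completeness} to the known completeness of Arieli and Avron's Gentzen calculus $\mathit{GBL}$ (resp.\ $\mathit{GBS}$) with respect to $\mathsf{B}$ (resp.\ $\mathsf{CB}$), which is \cite[Theorem 3.7]{arieli1996reasoning}.

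For soundness, I would check each axiom and rule individually against the bilattice semantics. The order-theoretic axioms for $\wedge, \vee$ are justified by $\leq_t$ being a lattice order, those for $\band, \bor$ by $\leq_k$; the distribution axioms are guaranteed by the assumption $\mathbb{B} \in \mathsf{B}$; the De Morgan axioms for $\neg$ (and, for CBL, for $-$) are exactly the equations recorded in the two Facts; and the axioms for $\bt, \bfb, \bkt, \bkb$ reflect the lattice bounds. The rules (transitivity of $\fCenter$ and the introduction rules for meets and joins in both orders) follow from the universal properties of lattice operations together with the fact that $F_{\bt}$ is the least bifilter of $\mathbb{B}$.

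For completeness, I would adopt the translation stated in the footnote which views a $\mathit{GBL}$-sequent $\Gamma \Rightarrow \Delta$ as the BL-sequent $\bigwedge \Gamma \fCenter \bigvee \Delta$, and show that every axiom and every rule of $\mathit{GBL}$ is derivable in BL under this translation; the statement then follows by composing the reduction with \cite[Theorem 3.7]{arieli1996reasoning}. The standard introduction rules of $\mathit{GBL}$ for $\wedge, \vee, \band, \bor$ are immediate, since BL already contains the projection axioms (such as $A \wedge B \fCenter A$) and the two-premise introduction rules. The main obstacle will be the \emph{non-standard} negation rules of $\mathit{GBL}$, which introduce each binary connective under the scope of a $\neg$ and have no direct counterpart among the primitive BL-rules. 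My plan is to simulate each such rule by a short derivation that first invokes the relevant De Morgan axiom ($\neg(A \wedge B) \dashv\vdash \neg A \vee \neg B$ and its three siblings) to rewrite the negated compound as a compound of negated formulas, then applies the corresponding standard rule, and finally closes by transitivity. Any rule involving iterated negation is similarly absorbed using $\neg\neg A \dashv\vdash A$.

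The CBL case proceeds in parallel: the conflation rules of $\mathit{GBS}$ are simulated by combining the involution axiom $-- A \dashv\vdash A$, the commutation $-\neg A \dashv\vdash \neg - A$, the De Morgan laws for conflation, and the axioms for $-\bt, -\bfb, -\bkt, -\bkb$, exactly as in the BL case with $-$ in place of $\neg$. I expect no essentially new difficulty here, only a longer case analysis. Once each $\mathit{GBS}$-derivation has been translated into a CBL-derivation, the completeness of $\mathit{GBS}$ gives the right-to-left implication, concluding the argument. The heart of the proof is therefore not the invocation of Arieli--Avron completeness but the systematic bookkeeping by which each non-standard negation (or conflation) introduction is reduced, via a De Morgan axiom, to a standard introduction.
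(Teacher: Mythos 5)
Your proposal follows exactly the strategy the paper itself uses: soundness by direct inspection of the axioms and rules against the bilattice semantics, and completeness by translating each $\mathit{GBL}$ (resp.\ $\mathit{GBS}$) sequent $\Gamma \Rightarrow \Delta$ as $\bigwedge\Gamma \fCenter \bigvee\Delta$, deriving every Arieli--Avron axiom and rule in BL (resp.\ CBL), and then invoking \cite[Theorem 3.7]{arieli1996reasoning}. Your additional detail on how the non-standard negation rules are absorbed via the De Morgan axioms is a correct elaboration of the bookkeeping the paper leaves implicit.
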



\section{Multi-type algebraic presentation}
\label{sec:alg}

In the present section we introduce the algebraic environment which justifies semantically the multi-type approach to bilattice logic presented in Section~\ref{sec:disp}. 
The main insight is that (bounded) bilattices (with conflation) can be equivalently presented as heterogeneous  structures, i.e.~tuples consisting of two (bounded) distributive lattices (De Morgan algebras) together with two maps between them.

\subsection*{Multi-type semantic environment}
For a bilattice $\mathbb{B}$, let $\mathrm{Reg}(\mathbb{B}) = \{ a \in B: a = \neg a\}$ be the set of
\emph{regular elements}~\cite{bou2011varieties}. 
It is easy to show that $\mathrm{Reg}(\mathbb{B})$ is closed under $\band$ and $\bor$, hence $(\mathrm{Reg}(\mathbb{B}), \band, \bor)$ is a sublattice of $(B, \band, \bor)$. For every $a \in B$, we let
$$\mathrm{reg}(a) := (a \vee (a \band \neg a)) \bor \neg(a \vee (a \band \neg a))$$
be the regular element associated with $a$. 
It follows from the representation result of \cite[Theorem 3.2]{bou2011varieties}  that
$$\mathbb{B} \cong (\mathrm{Reg}(\mathbb{B}), \band, \bor) \odot  (\mathrm{Reg}(\mathbb{B}), \band, \bor)$$
where the isomorphism $\pi: B \rightarrow \mathrm{Reg}(\mathbb{B}) \times \mathrm{Reg}(\mathbb{B})$ is defined, for all $a \in B$, as $\pi(a) := \langle\mathrm{reg}(a)$, $ \mathrm{reg}(\neg a)\rangle$. The inverse map $f: \mathrm{Reg}(\mathbb{B}) \times \mathrm{Reg}(\mathbb{B}) \rightarrow B$ is defined, for all $\langle a, b \rangle \in  \mathrm{Reg}(\mathbb{B}) \times \mathrm{Reg}(\mathbb{B})$, as
$$f(\langle a, b \rangle) := (a \band (a \vee b)) \bor (b \band (a \wedge b)). $$

\subsection*{Heterogeneous Bilattices}
\label{Heterogeneous}

\begin{definition}

A distributive lattice $\mathbb{A}$ is  {\em perfect} $(\label{def: perfect algebra}$cf.~$\cite{gehrke2001bounded})$ if
it is complete, completely distributive and completely join-generated by the set $J^{\infty}(\mathbb{A})$ of its completely join-irreducible elements (as well as completely meet-generated by the set  $M^{\infty}(\mathbb{A})$ of its completely meet-irreducible elements).

A lattice isomomorphism $h: \mathbb{L} \rightarrow  \mathbb{L'}$ is  {\em complete} if it satisfies the following properties for each $X\subseteq \mathbb{L}$:
\begin{center}
$h(\bigvee X) = \bigvee h(X) \quad\quad h(\bigwedge X) = \bigwedge h(X)$,
\end{center}
\end{definition}

\begin{definition}
\label{def:hb}
A {\em heterogeneous bilattice} (HBL) is a tuple $\mathbb{H} = (\mathbb{L}_1,~\mathbb{L}_2,~\ud,~\du)$
 satisfying the following conditions: 
\begin{itemize}
\item[($\mathrm{H}1$)] $\mathbb{L}_1$, $\mathbb{L}_2$  are bounded distributive lattices. 
\item[($\mathrm{H}2$)]  $\ud: \mathbb{L}_1  \to \mathbb{L}_2$ and $\du: \mathbb{L}_2  \to \mathbb{L}_1$ are mutually inverse lattice isomorphisms. 
\end{itemize}

An HBL is {\em perfect} if:
\begin{itemize}
\item[($\mathrm{H}3$)]  both $\mathbb{L}_1$ and $\mathbb{L}_2$ are perfect lattices;
\item[($\mathrm{H}4$)]  $\du, \ud$ are complete lattice isomorphisms.
\end{itemize}
\end{definition}

By $(\mathrm{H}2)$ we have 
that np = $\mathrm{Id}_{\mathbb{L}_1}$ and pn = $\mathrm{Id}_{\mathbb{L}_2}$. The definition of {\em the heterogeneous bilattice with conflation} (HCBL) is analogous, except that we replace $(\mathrm{H}1)$ with $(\mathrm{H}1')$: $\mathbb{L}_1$ and $\mathbb{L}_2$ are De Morgan algebras.

The following lemma is an easy consequence of the results in~\cite[Theorems 2.3 and 3.2]{gehrke1994bounded}.

\begin{lemma}
\label{prop:canonical extensions}
If $(\mathbb{L}_1, \mathbb{L}_2, \ud, \du)$ is an HBL 
(HCBL
),
then $(\mathbb{L}^\delta, \mathbb{D}^\delta, \ud^\delta, \du^\delta)$ is a perfect HBL 
(HCBL
).
\end{lemma}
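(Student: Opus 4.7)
The plan is to invoke the general machinery of canonical extensions for bounded distributive lattices (and De Morgan algebras) developed in~\cite{gehrke1994bounded}, and to show that the construction lifts coordinatewise to heterogeneous (conflation-)bilattices.

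First, I would dispose of conditions $(\mathrm{H}3)$ and $(\mathrm{H}1')$ separately from the structural content. By Theorem~2.3 of~\cite{gehrke1994bounded}, the canonical extension $\mathbb{L}_i^\delta$ of any bounded distributive lattice is a perfect distributive lattice, so $(\mathrm{H}3)$ is immediate. In the HCBL case, I would further invoke the standard fact (again a consequence of the functoriality in~\cite{gehrke1994bounded}, specifically the treatment of additional operations in Theorem~3.2) that the De Morgan involution $\dneg$ on $\mathbb{L}_i$ extends canonically to an involution $\dneg^\delta$ on $\mathbb{L}_i^\delta$ which still satisfies the De Morgan laws and $\dneg^\delta\dneg^\delta = \mathrm{Id}$, so that $\mathbb{L}_i^\delta$ is again a De Morgan algebra, yielding $(\mathrm{H}1')^\delta$.

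Next, to establish $(\mathrm{H}2)^\delta$ and $(\mathrm{H}4)$ simultaneously, I would define $\ud^\delta$ and $\du^\delta$ as the canonical extensions of the maps $\ud$ and $\du$. Since $\ud, \du$ are lattice homomorphisms, they are in particular both isotone and preserve finite meets and joins; hence their $\sigma$- and $\pi$-extensions coincide (they are \emph{smooth}), and by Theorem~3.2 of~\cite{gehrke1994bounded} the resulting extensions $\ud^\delta\colon\mathbb{L}_1^\delta \to \mathbb{L}_2^\delta$ and $\du^\delta\colon\mathbb{L}_2^\delta\to\mathbb{L}_1^\delta$ preserve arbitrary meets and joins, i.e.\ they are complete lattice homomorphisms. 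This gives half of $(\mathrm{H}4)$; the other half (that they are bijections) will follow from their being mutually inverse.

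The main point to verify, and what I expect to be the only nontrivial step, is that $\ud^\delta$ and $\du^\delta$ remain mutually inverse. For this I would appeal to the functoriality of the canonical extension assignment on smooth maps: the composition $\ud^\delta\circ\du^\delta$ coincides with $(\ud\circ\du)^\delta = \mathrm{Id}_{\mathbb{L}_2}^\delta = \mathrm{Id}_{\mathbb{L}_2^\delta}$, and symmetrically for $\du^\delta\circ\ud^\delta$. If one prefers to avoid citing functoriality as a black box, the same equality can be checked directly on the dense generating subset $\mathbb{L}_2 \subseteq \mathbb{L}_2^\delta$ (where $\ud^\delta\du^\delta$ agrees with $\ud\du = \mathrm{Id}_{\mathbb{L}_2}$ by construction) and then extended to all of $\mathbb{L}_2^\delta$ using the fact that both $\ud^\delta\du^\delta$ and $\mathrm{Id}_{\mathbb{L}_2^\delta}$ are completely join- and meet-preserving, together with the fact that $\mathbb{L}_2^\delta$ is doubly generated by $\mathbb{L}_2$. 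Finally, in the HCBL case one additionally checks that the commutation of $\ud, \du$ with $\dneg$ (built into the HCBL axioms via $(\mathrm{H}1')$ and the isomorphism requirement) transfers to $\ud^\delta, \du^\delta$ and $\dneg^\delta$ by the same density-plus-continuity argument. This exhausts all four clauses of Definition~\ref{def:hb} for the extended tuple.
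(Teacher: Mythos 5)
Your proof is correct and takes essentially the same route as the paper, which offers no written argument at all but simply declares the lemma ``an easy consequence of'' Theorems~2.3 and~3.2 of~\cite{gehrke1994bounded} --- precisely the two results you invoke. Your write-up (perfection of $\mathbb{L}_i^\delta$, smoothness of the lattice isomorphisms $\ud,\du$, preservation of composition and hence of the mutual-inverse identities, and the analogous transfer for $\dneg$ in the De Morgan case) supplies exactly the details that citation is meant to cover.
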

$$
\begin{tikzpicture}[node/.style={circle, draw, fill=black}, scale=1]
\node (DL) at (-1.5,-1.5) {$\mathbb{L}_1$};
\node (DL delta) at (-1.5,1.5) {$\mathbb{L}_1^{\delta}$};
\node (DM) at (1.5,-1.5) {$\mathbb{L}_2$};
\node (DM delta) at (1.5,1.5) {$\mathbb{L}_2^{\delta}$};
\draw [right hook->] (DL) to  (DL delta);
\draw [right hook->] (DM)  to (DM delta);
\draw [->] (DL)  to  [out=-55,in=-125, looseness=0.8] node[above] {$\mathrm{n}$}  (DM);
\draw [->] (DL delta)  to [out=-55,in=-125, looseness=0.8] node[above] {$\mathrm{n}^{\delta}$}  (DM delta);
\draw [->] (DM delta) to [out=135,in=45, looseness=0.8]   node[below] {$\mathrm{p}^{\delta}$}(DL delta);
\draw [->] (DM) to [out=135,in=45, looseness= 0.8]   node[below] {$\mathrm{p}$}  (DL);
\end{tikzpicture}
$$

\subsection*{Equivalence of the two presentations}
\label{Heterogeneous presentation}

The following result is an immediate consequence of Definition~\ref{def:hb}.

\begin{proposition}
\label{prop:from single to multi}
For any bounded distributive bilattice $\mathbb{B}$, 
the tuple
$\mathbb{B}^+ = (\mathbb{L}_1 = \mathrm{Reg}\mathbb{(B)},~\mathbb{L}_2 = \mathrm{Reg}\mathbb{(B)},~\du =  \mathrm{Id}_{\mathrm{Reg}\mathbb{(B)}},~\ud =  \mathrm{Id}_{\mathrm{Reg}\mathbb{(B)}})$ is an HBL., where $\uand = \dand = \band, \uor = \dor = \bor, \utop = \dtop = \top$ and $\ubot = \dbot =\bot.$

\noindent For any CB $\mathbb{B}$, 
$\mathbb{B}^+ = ( \mathbb{L}_1 = ( \mathrm{Reg}\mathbb{(B)}, - ),~\mathbb{L}_2 = ( \mathrm{Reg}\mathbb{(B)}, - ),~\du =  \mathrm{Id}_{\mathrm{Reg}\mathbb{(B)}},~\ud =  \mathrm{Id}_{\mathrm{Reg}\mathbb{(B)}})$ is an HCBL., where $\dneg_2 = \dneg_1 = -$.

\end{proposition}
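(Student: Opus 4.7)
The plan is to verify each clause of Definition~\ref{def:hb} (and the variant for HCBL) for the proposed tuple; the statement is essentially a consequence of structural properties of the set $\mathrm{Reg}(\mathbb{B})$ that have already been recorded or follow directly from the bilattice axioms and the Fact on De Morgan laws for negation and conflation.

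For the HBL claim, I first verify $(\mathrm{H}1)$ for $\mathbb{L}_1 = \mathbb{L}_2 = (\mathrm{Reg}(\mathbb{B}), \band, \bor, \top, \bot)$. The preceding paragraph already notes that $\mathrm{Reg}(\mathbb{B})$ is closed under $\band$ and $\bor$, so it forms a sublattice of $(B, \band, \bor)$. Distributivity is inherited from $\mathbb{B}$, which is distributive by assumption. Boundedness reduces to checking that the $\leq_k$-bounds $\top, \bot$ lie in $\mathrm{Reg}(\mathbb{B})$, which is immediate from $\neg \top = \top$ and $\neg \bot = \bot$ (Fact on De Morgan laws for negation, bounded case). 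Clause $(\mathrm{H}2)$ is then trivial: since $\mathbb{L}_1$ and $\mathbb{L}_2$ carry \emph{the same} lattice structure, the two identity maps $\ud, \du$ are visibly mutually inverse lattice isomorphisms.

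For the HCBL claim, I would in addition verify $(\mathrm{H}1')$, that is, that $(\mathrm{Reg}(\mathbb{B}), \band, \bor, -)$ is a De Morgan algebra. The distributive-lattice part is as before. The only genuinely new step is to show that $-$ restricts to an operation on $\mathrm{Reg}(\mathbb{B})$: given $a \in \mathrm{Reg}(\mathbb{B})$, we have $a = \neg a$, so by commutativity $\neg\, {-}a = -\neg a = -a$, hence $-a \in \mathrm{Reg}(\mathbb{B})$. Involution $--a = a$ is inherited from $\mathbb{B}$, and the required De Morgan laws $-(x \band y) = -x \bor -y$ and $-(x \bor y) = -x \band -y$ are exactly two of the four equations in the Fact on De Morgan laws for conflation. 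So $(\mathrm{Reg}(\mathbb{B}), \band, \bor, -)$ is a De Morgan algebra, and $(\mathrm{H}2)$ again holds trivially since $\ud, \du$ are identities on the same underlying structure.

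There is no substantial obstacle here; the statement is really a packaging result that organises known facts about $\mathrm{Reg}(\mathbb{B})$ into the heterogeneous template. If anything deserves care, it is the verification that $-$ preserves regularity in the CBL case, since this is the only step that actually uses the commutativity axiom $\neg - x = - \neg x$ assumed in the class $\mathsf{CB}$; everything else follows either from the sublattice remark preceding the proposition or from the explicit De Morgan identities recorded in the two Facts.
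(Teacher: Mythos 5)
Your verification is correct and follows exactly the route the paper intends: the paper itself dismisses this proposition as "an immediate consequence of Definition~\ref{def:hb}" together with the earlier remarks on $\mathrm{Reg}(\mathbb{B})$, and your write-up simply makes those omitted details explicit. In particular, your observation that closure of $\mathrm{Reg}(\mathbb{B})$ under conflation is the one step genuinely requiring the commutativity axiom $\neg{-}x = -\neg x$ is the right thing to single out.
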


\begin{proposition}
\label{prop:reverse engineering}
If $(\mathbb{L}_1,~\mathbb{L}_2,~n,~p)$ is an HBL (resp.~HCBL),
then $L_1 \times L_2$ can be endowed with the following structure:
\begin{center}
\begin{tabular}{rcl}
$\langle a_1, a_2 \rangle \band \langle b_1, b_2 \rangle$& $:=$&$ \langle a_1 \uand b_1,~a_2 \dand b_2 \rangle$\\

$\langle a_1, a_2 \rangle \bor \langle b_1, b_2 \rangle$& $:=$&$ \langle a_1 \uor b_1,~a_2 \dor b_2 \rangle$\\

$\langle a_1, a_2 \rangle \wedge \langle b_1, b_2 \rangle$& $:=$&$ \langle a_1 \uand b_1,~a_2 \dor b_2 \rangle$\\

$\langle a_1, a_2 \rangle \vee \langle b_1, b_2 \rangle$& $:=$&$ \langle a_1 \uor b_1,~a_2 \dand b_2 \rangle$\\

$\neg \langle a_1, a_2 \rangle $&$ :=$ &$\langle p (a_2), n (a_1) \rangle$\\

$- \langle a_1, a_2 \rangle $&$ :=$ &$\langle \du (\dneg_2a_2), \ud (\dneg_1a_1) \rangle$\\

$\bfb$&$:=$ & $\langle 0,  1\rangle$\\
$\bt$&$:=$ & $\langle 1,  0 \rangle$\\
$\bot$&$:=$ & $\langle 0,  0\rangle$\\
$\top$&$:=$ & $\langle1,  1\rangle$\\

\end{tabular}
\end{center}










\end{proposition}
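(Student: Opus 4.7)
The plan is to reduce the verification to Theorem~\ref{th:rep} (and, for the HCBL case, to the analogous representation for bilattices with conflation) by exhibiting an isomorphism between $L_1 \times L_2$ with the operations of the statement and the product bilattice $\mathbb{L}_1 \odot \mathbb{L}_1$ of Theorem~\ref{th:rep}. This avoids a direct case-by-case check of all bilattice axioms (16 distributive laws, negation equations, \emph{etc.}).

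Concretely, I would define $\Phi : L_1 \times L_2 \to L_1 \times L_1$ by $\Phi\langle a_1, a_2 \rangle := \langle a_1, \du(a_2)\rangle$. By condition~(H2), which yields $\du \circ \ud = \mathrm{Id}_{\mathbb{L}_1}$ and $\ud \circ \du = \mathrm{Id}_{\mathbb{L}_2}$, $\Phi$ is a bijection with inverse $\langle b_1, b_2\rangle \mapsto \langle b_1, \ud(b_2)\rangle$. Since $\du$ is a lattice isomorphism preserving $\dand, \dor$ and the bounds, the preservation of $\wedge, \vee, \band, \bor$ and of the four constants by $\Phi$ reduces to routine componentwise computations; for instance,
\[
\Phi(\langle a_1, a_2\rangle \wedge \langle b_1, b_2\rangle) = \langle a_1 \uand b_1,\,\du(a_2 \dor b_2)\rangle = \langle a_1 \uand b_1,\,\du(a_2) \uor \du(b_2)\rangle,
\]
which coincides with $\Phi\langle a_1, a_2\rangle \wedge \Phi\langle b_1, b_2\rangle$ as computed in $\mathbb{L}_1 \odot \mathbb{L}_1$. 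For negation one gets
\[
\Phi(\neg\langle a_1, a_2\rangle) = \langle \du(a_2),\,\du(\ud(a_1))\rangle = \langle \du(a_2), a_1\rangle = \neg\,\Phi\langle a_1, a_2\rangle.
\]

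Once $\Phi$ is seen to be an isomorphism of signatures, Theorem~\ref{th:rep} applied to $\mathbb{L}_1$ ensures that $\mathbb{L}_1 \odot \mathbb{L}_1$ is a bounded distributive bilattice, and transporting this structure back along $\Phi^{-1}$ shows that $L_1 \times L_2$, endowed with the operations of the proposition, is likewise a bounded distributive bilattice. The HCBL case is handled analogously, using the corresponding representation via $\mathbb{D} \odot \mathbb{D}$ with $\mathbb{D} := \mathbb{L}_1$; the additional step is to check that $\Phi$ preserves the conflation, which reduces to the identity $\du \circ \dneg_2 = \dneg_1 \circ \du$.

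The main subtle point I expect is precisely this last identity. The HCBL definition replaces (H1) with (H1$'$), requiring $\mathbb{L}_1, \mathbb{L}_2$ to be De Morgan algebras, and the natural reading is that $\ud, \du$ are then De Morgan algebra isomorphisms rather than merely lattice isomorphisms; under this reading the compatibility is immediate. If instead (H2) is read strictly, then commutation with the De Morgan negations must be added as an explicit hypothesis. Granted this compatibility, the full conflation axioms---including the commutativity $\neg\,- = -\neg$---follow from the corresponding equations in $\mathbb{D} \odot \mathbb{D}$ by transport along $\Phi$.
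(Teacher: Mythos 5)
Your proposal is correct, but it takes a genuinely different route from the paper. The paper proves Proposition~\ref{prop:reverse engineering} by direct verification: it checks one instance of distributivity for $\band,\bor$ componentwise (declaring the $\wedge,\vee$ case analogous), then verifies by hand that $\neg$ is $\leq_t$-antitone, $\leq_k$-monotone and involutive, and likewise that $-$ has the required tonicity, is involutive, and commutes with $\neg$. You instead transport the structure along the bijection $\Phi\langle a_1,a_2\rangle = \langle a_1, \du(a_2)\rangle$ from $\mathbb{L}_1\odot\mathbb{L}_1$ of Theorem~\ref{th:rep} (resp.\ $\mathbb{D}\odot\mathbb{D}$ with $\mathbb{D}=\mathbb{L}_1$ for conflation). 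Your computations checking that $\Phi$ intertwines the operations are correct, and since all the bilattice axioms are (quasi-)equational they do transfer along an isomorphism of signatures; what this buys you is that \emph{all} sixteen distributive laws, the De Morgan laws, and the conflation axioms come for free from the representation theorem, whereas the paper's direct check is only carried out for a sample of cases. The cost is a dependence on Theorem~\ref{th:rep} (and its conflation analogue), which are however already part of the paper's preliminaries. Your flagged subtlety --- that the HCBL case needs $\du\circ\dneg_2 = \dneg_1\circ\du$, which does not literally follow from (H2) as stated --- is accurate and is \emph{not} an artifact of your approach: the paper's own computations of $--\langle a_1,a_2\rangle$ and of $-\neg = \neg-$ invoke exactly this commutation while citing ``(H2)'', so the paper implicitly reads the HCBL maps as De Morgan algebra isomorphisms, just as you propose.
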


\begin{proof}
Firstly, we show that $\langle L_1 \times L_2, \band, \bor \rangle$ and $\langle L_1 \times L_2, \wedge, \vee \rangle$ are bounded distributive lattices. It is obvious that they are both bounded lattices. 
We only need to show that the distributivity law holds. We have:
\vspace{-2.5ex}
\begin{center}
\begin{tabular}{llr}
  & $\langle a_1,~a_2 \rangle \band (\langle b_1,~b_2 \rangle \bor (\langle c_1,~c_2 \rangle)$ =&  \\

=\  & $\langle a_1,~a_2 \rangle \band (\langle b_1 \uor c_1 ,~b_2 \dor c_2 \rangle)$ & (Def.~of $\bor$)
 \\

=\  & $\langle a_1 \uand (b_1 \uor c_1),~a_2 \dand (b_2 \dor c_2) \rangle$ & (Def.~of $\band$) \\

=\  & \mc{2}{l}{$\langle (a_1 \uand b_1) \uor (a_1 \uand c_1),~(a_2 \dand b_2) \dor (a_2 \dand c_2) \rangle$} \\
   & \mc{2}{r}{(Distributivity of $\mathbb{L}_1$ and $\mathbb{L}_2$)} \\

=\  & \mc{2}{l}{$\langle (a_1 \uand b_1),~(a_2 \dand b_2) \rangle \bor \langle(a_1 \uand c_1),~(a_2 \dand c_2)\rangle$} \\
   & \mc{2}{r}{(Def.~of $\bor$)} \\

=\  & \mc{2}{l}{$(\langle a_1,~a_2\rangle \band \langle b_1,~b_2\rangle) \bor (\langle a_1,~a_2\rangle \band \langle c_1,~c_2\rangle)$} \\
   & \mc{2}{r}{(Def.~of $\band$)} \\
\end{tabular}
\end{center}
As to $\langle L_1 \times L_2, \wedge, \vee \rangle$, the argument is analogous.

Now we show that the properties of $\neg$ are also met. Assume that $\langle a_1,~a_2\rangle \leq_t \langle b_1,~b_2\rangle$, equivalently, $a_1 \leq_1 b_1$ and $b_2 \leq_2 a_2$. By the definition of $\neg$,  we have $\neg\langle a_1,~a_2\rangle = \langle \du (a_2),~\ud A_1\rangle$ and $\neg\langle b_1,~b_2\rangle = \langle \du (b_2),~\ud (b_1)\rangle$. Hence $\du (b_2) \leq_1 \du (a_2)$ and $\ud A_1 \leq_2 \ud (b_1)$ by (H2). Thus $\neg\langle b_1,~b_2\rangle \leq_t \neg\langle a_1,~a_2\rangle$. 
A similar reasoning shows that the corresponding property involving $\neg$ and $\leq_k$ also holds.
The following argument shows that $\neg$ is involutive.

 \begin{center}
\begin{tabular}{lll}
&$\neg\neg \langle a_1,~a_2\rangle $ = &\\
=&$ \neg \langle \du (a_2),~\ud A_1\rangle$&Def. of $\neg$\\
=&$ \langle \du\ud A_1,~\ud\du (a_2)\rangle$&Def. of $\neg$\\
=&$ \langle a_1,~a_2\rangle$&np = $\mathrm{Id}_{\mathbb{L}_1}$ and pn = $\mathrm{Id}_{\mathbb{L}_2}$
\end{tabular}
\end{center}

As to conflation, assume $\langle a_1,~a_2\rangle \leq_t \langle b_1,~b_2\rangle$, equivalently, $a_1 \leq_1 b_1$ and $b_2 \leq_2 a_2$. By the definition of $-$ we have  $-\langle a_1,~a_2\rangle = \langle \du (\dneg_2 a_2),~\ud (\dneg_1a_1)\rangle$ and $-\langle b_1,~b_2\rangle = \langle \du (\dneg_2 b_2),~\ud (\dneg_1b_1 )\rangle$. Hence 
$\du (\dneg_2a_2) \leq_1 \du (\dneg_2 b_2)$ and $\ud (\dneg b_1) \leq_2 \ud (\dneg b_2)$ by (H2). Thus $- \langle a_1,~a_2\rangle \leq_t -\langle b_1,~b_2\rangle$. 
A similar reasoning shows that the corresponding property involving $-$ and $\leq_k$ also holds.
The following arguments show that $-$ is involutive and $-\neg$ are commutative.

 \begin{center}
\begin{tabular}{l@{}ll}
&$--\langle a_1,~a_2\rangle$ = &\\
=\ \ &$-\langle \du (\dneg_2a_2),~\ud (\dneg_1 a_1) \rangle$& (Def.~of $-$) \\
=&$ \langle \du (\dneg_2\ud (\dneg_1a_1)),~\ud (\dneg_1\du (\dneg_2a_2))\rangle$ & (Def.~of $-$) \\
=&$ \langle \du (\dneg_2\dneg_2\ud(a_1)),~\ud (\dneg_1\dneg_1 \du (a_2))\rangle$ & (H2) \\
=&$ \langle \du\ud(a_1),~\ud\du (a_2)\rangle$ & (H1) \\
=&$ \langle a_1,~a_2\rangle$ & (np = $\mathrm{Id}_{\mathbb{L}_1}$, 
\\
& 
& 
pn = $\mathrm{Id}_{\mathbb{L}_2}$). \\

&\\

&$-\neg\langle a_1,~a_2\rangle$ = &\\
=\ \ &$-\langle \du (a_2),~\ud (a_1)\rangle$& (Def.~of $\neg$) \\
=&$ \langle \du (\dneg_2\ud (a_1)),~\ud (\dneg_1\du (a_2))\rangle$ & (Def.~of $-$) \\
=&$ \neg \langle \dneg_1\du (a_2),~\dneg_2\ud (a_1)\rangle$ & (Def.~of $\neg$) \\
=&$ \neg \langle \du (\dneg_2 a_2),~\ud (\dneg_1a_2 )\rangle$ & (H2) \\
=&$\neg-\langle a_1,~a_2\rangle$ & (Def.~of $-$). \\
\end{tabular}
\end{center}
\end{proof}

\begin{definition}
\label{def: aplus}
For any HBL $\mathbb{H} = (\mathbb{L}_1,~\mathbb{L}_2,~n,~p)$,  we denote by 
$\mathbb{H}_+ = (B,~\wedge,~\vee,~\band,~\bor,~\neg)$ the product algebra where the four lattice operations are 
defined as in $\mathbb{L}_1 \odot \mathbb{L}_2$ (Theorem~\ref{th:rep}) and the negation is given by 
$\neg \langle a_1, a_2 \rangle := \langle \du (a_2),~\ud A_1 \rangle$ for all $ \langle a_1, a_2 \rangle \in B$.
If $\mathbb{L}_1$ and $\mathbb{L}_2$ are isomorphic De Morgan algebras, then we define 
$\mathbb{H}_+ = (B,~\wedge,~\vee,~\band,~\bor,~\neg,~-)$ as before, with the conflation given by
$- \langle a_1, a_2 \rangle := \langle \du (\dneg_2 a_2),~\ud (\dneg_1a_1)\rangle$ for all $ \langle a_1, a_2 \rangle \in B$.
%
\end{definition}

\begin{proposition}
For any $\mathbb{B} \in \mathsf{B}$ (resp.~$\mathbb{B} \in \mathsf{CB}$) and  and any HBL (resp.~HCBL) $\mathbb{H}$, we have
\[\mathbb{B} \cong (\mathbb{B}^+)_+ \quad \mbox{and}\quad \mathbb{H} \cong (\mathbb{H}_+)^+.\]
\end{proposition}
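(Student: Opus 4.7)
The plan is to handle the two isomorphisms separately, in each case first treating the pure HBL/bilattice level and then upgrading to conflation as a small extension. For both directions, the strategy is to unfold the definitions of $(-)_+$ and $(-)^+$ until the claim reduces either to the representation theorem of Bou--Rivieccio (already recalled at the start of Section~\ref{sec:alg}) or to a one-line application of condition~(H2).

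For $\mathbb{B}\cong(\mathbb{B}^+)_+$, I would observe that by Proposition~\ref{prop:from single to multi} the HBL $\mathbb{B}^+$ has both components equal to $(\mathrm{Reg}(\mathbb{B}),\band,\bor)$, with $\ud=\du=\mathrm{Id}$. Plugging this into Definition~\ref{def: aplus}, the operations of $(\mathbb{B}^+)_+$ collapse to those of the product bilattice $\mathrm{Reg}(\mathbb{B})\odot\mathrm{Reg}(\mathbb{B})$ from Theorem~\ref{th:rep}; in particular $\neg\langle a_1,a_2\rangle=\langle a_2,a_1\rangle$. Hence the Bou--Rivieccio map $\pi(a):=\langle\mathrm{reg}(a),\mathrm{reg}(\neg a)\rangle$ recalled at the start of Section~\ref{sec:alg} gives the desired bilattice isomorphism. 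For $\mathbb{B}\in\mathsf{CB}$ I would additionally verify $\pi(-a)=-\pi(a)$ by using $\neg-=-\neg$ together with the fact that $-$ restricts to $\mathrm{Reg}(\mathbb{B})$ as a De Morgan negation, which is exactly how the conflation on $(\mathbb{B}^+)_+$ is defined in Definition~\ref{def: aplus}.

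For $\mathbb{H}\cong(\mathbb{H}_+)^+$, I would first characterize $\mathrm{Reg}(\mathbb{H}_+)$: unfolding $\neg\langle a_1,a_2\rangle=\langle \du(a_2),\ud(a_1)\rangle$ from Proposition~\ref{prop:reverse engineering}, a pair is regular iff $a_1=\du(a_2)$ and $a_2=\ud(a_1)$; by~(H2) both conditions reduce to $a_2=\ud(a_1)$. Hence $\mathrm{Reg}(\mathbb{H}_+)=\{\langle a_1,\ud(a_1)\rangle:a_1\in L_1\}$. Setting $h_1(a_1):=\langle a_1,\ud(a_1)\rangle$ and $h_2(a_2):=\langle\du(a_2),a_2\rangle$ gives two bijections (inverses are the projections). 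Since the $\band,\bor,\wedge,\vee$ operations on $\mathrm{Reg}(\mathbb{H}_+)$ are inherited coordinate-wise from $\mathbb{H}_+$ and $\ud,\du$ are lattice isomorphisms, $h_1$ and $h_2$ are lattice isomorphisms. Because $(\mathbb{H}_+)^+$ has identity transfer maps, an HBL-isomorphism with $\mathbb{H}$ amounts to checking $h_2\circ\ud=h_1$ and $h_1\circ\du=h_2$, both of which follow at once from $\du\circ\ud=\mathrm{Id}_{\mathbb{L}_1}$ and $\ud\circ\du=\mathrm{Id}_{\mathbb{L}_2}$.

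The main obstacle I anticipate is bookkeeping around conflation. In the HCBL case of the second isomorphism, showing $h_1(\dneg_1 a_1)=-h_1(a_1)$ expands to $\langle\dneg_1 a_1,\ud(\dneg_1 a_1)\rangle=\langle\du(\dneg_2\ud(a_1)),\ud(\dneg_1 a_1)\rangle$, which requires $\ud$ and $\du$ to intertwine $\dneg_1$ with $\dneg_2$; this is the compatibility implicit in~(H1$'$)+(H2) that was already used in the last two computations of Proposition~\ref{prop:reverse engineering}. In the CB case of the first isomorphism, verifying $\pi(-a)=-\pi(a)$ requires combining the explicit formula $\mathrm{reg}(a)=(a\vee(a\band\neg a))\bor\neg(a\vee(a\band\neg a))$ with the De Morgan laws for $-$ and with $\neg-=-\neg$; once this is done, everything else is routine unpacking of Definition~\ref{def: aplus} and Proposition~\ref{prop:reverse engineering}.
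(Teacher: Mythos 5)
Your proposal is correct and matches the paper's route: the paper's own proof consists of the single line ``Immediately follows from Propositions~\ref{prop:from single to multi} and~\ref{prop:reverse engineering}'', and what you have written is precisely the unfolding of that claim --- the Bou--Rivieccio map $\pi(a)=\langle\mathrm{reg}(a),\mathrm{reg}(\neg a)\rangle$ for $\mathbb{B}\cong(\mathbb{B}^+)_+$ and the maps $h_1,h_2$ onto $\mathrm{Reg}(\mathbb{H}_+)$ for $\mathbb{H}\cong(\mathbb{H}_+)^+$, with the conflation checks resting on the same compatibility of $\ud,\du$ with $\dneg_1,\dneg_2$ that the paper already invokes under the label (H2) in Proposition~\ref{prop:reverse engineering}. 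One cosmetic remark: $\mathrm{Reg}(\mathbb{H}_+)$ is closed under $\band$ and $\bor$ but not under $\wedge$ and $\vee$, so strictly only the former are ``inherited'' operations on it; since the components of $(\mathbb{H}_+)^+$ carry only the $\band,\bor$ structure, this does not affect your argument.
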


\begin{proof}
Immediately follows from Propositions~\ref{prop:from single to multi} and~\ref{prop:reverse engineering}.
\end{proof}



\section{Multi-type proper display calculus}
\label{sec:disp}

In this section we introduce the proper display calculus D.BL (D.CBL) for bilattice logic (with conflation). 
\vspace{-3ex}

\subsection*{Language}

The language $\mathcal{L_{MT}}$ of D.BL is given by the union of the  sets $\mathcal{L}_1$ and $\mathcal{L}_2$ defined as follows. $\mathcal{L}_1$ is given by simultaneous induction over the set $\mathsf{AtProp}_1 = \{p_1, q_1, r_1,\ldots\}$ of $\mathsf{L_1}$-type atomic propositions as follows:
\begin{center}
\begin{tabular}{@{}r@{}c@{}l}
$A_1$ & $\ ::=\ $ & $p_1 \mid \utop \mid \ubot \mid  \du A_2 \mid  A_1 \uand A_1 \mid A_1 \uor A_1$ \\
$X_1$ & $\ ::=\ $ & $A_1 \mid \UTOP \mid \UBOT\mid \DU X_2  \mid X_1 \UAND X_1 \mid X_1 \UOR X_1$ \\
\end{tabular}
\end{center}
$\mathcal{L}_2$ is given by simultaneous induction over the set $\mathsf{AtProp}_2 = \{p_2, q_2, r_2,\ldots\}$ of 
$\mathsf{L_2}$-type atomic propositions as follows:
\begin{center}
\begin{tabular}{@{}r@{}c@{}l}
$A_2$ & $\ ::=\ $ & $p_2 \mid \dtop \mid \dbot \mid \ud A_1  \mid A_2 \dand A_2 \mid A_2 \dor A_2$ \\
$X_2$ & $\ ::=\ $ & $A_2 \mid \DTOP \mid \DBOT \mid \UD X_1 \mid  X_1 \DAND X_1 \mid X_1 \DOR X_1$ \\ 
\end{tabular}
\end{center}

The language of D.CBL can be obtained by adding structural operators $\ast_1$ and $\ast_2$ and their corresponding connectives $\dneg_1$, $\dneg_2$ to $\mathcal{L}_1$ and $\mathcal{L}_2$ respectively.

\subsection*{Rules}
For $i \in \{1, 2\}$,
\begin{itemize}
\item Pure $\mathsf{L}_i$-type display rules
\end{itemize}
\begin{center}
\begin{tabular}{rl}
\AX $X_i \, \hat{\sqcap}_i \, Y_i \fCenter Z_i $
\LeftLabel{\scriptsize res}
\doubleLine
\UI $X_i \fCenter Y_i \, \check{\sqsupset} \, Z_i $
\DP
 & 
\AX $X_i \fCenter  Y_i \,\check{\sqcup}_i \, Z_i $
\RightLabel{\scriptsize res}
\doubleLine
\UI $X_i \, \hat{\sqsubset}_i \, Y_i  \fCenter  Z_i $
\DP
 \\
\end{tabular}
\end{center}

\begin{itemize}
\item Multi-type display rules
\end{itemize}
\begin{center}
\begin{tabular}{rl}
\AX $\DU X_2 \fCenter Y_1$
\LeftLabel{\scriptsize adj}
\doubleLine
\UI$X_2 \fCenter \UD Y_1$
\DP
 & 
\AX $\UD X_1 \fCenter Y_2$
\RightLabel{\scriptsize adj}
\doubleLine
\UI$X_1 \fCenter \DU Y_2$
\DisplayProof
 \\
\end{tabular}
\end{center}

\begin{itemize}
\item Pure $\mathsf{L}_i$-type identity and cut rules
\end{itemize}
\begin{center}
\begin{tabular}{r@{}l}
\AxiomC{\phantom{$X_i \fCenter A_i$}}
\LeftLabel{\scriptsize $\mathrm{Id_i}$}
\UI$p_i \fCenter p_i$
\DisplayProof
 & 
\AX $X_i \fCenter A_i$
\AX $A_i \fCenter Y_i$
\RightLabel{\scriptsize Cut}
\BI$X_i \fCenter Y_i$
\DisplayProof
 \\
\end{tabular}
\end{center}

\begin{itemize}
\item Pure $\mathsf{L}_i$-type structural rules
\end{itemize}
\begin{center}
\begin{tabular}{rl}

\AX$X_i \,\hat{\sqcap}_i \, \hat{1}_i \fCenter Y_i$
\LeftLabel{\scriptsize $\hat{1}_i$}
\UI$X_i \fCenter Y_i$
\DisplayProof
 & 
\AX$X_i \fCenter Y_i \,\check{\sqcup}_i\, \check{0}_i$
\RightLabel{\scriptsize $\check{0}_i$}
\UI$X_i \fCenter Y_i$
\DisplayProof
\\

 & \\

\AX$X_i \,\hat{\sqcap}_i \,  Y_i \fCenter Z_i$
\LeftLabel{\scriptsize E}
\UI$Y_i \,\hat{\sqcap}_i \,  X_i \fCenter Z_i$
\DisplayProof
 & 
\AX$X_i \fCenter Y_i  \,\check{\sqcup}_i\, Z_i$
\RightLabel{\scriptsize E}
\UI $X_i \fCenter Z_i  \,\check{\sqcup}_i\, Y_i$
\DisplayProof
 \\

 & \\

\AX $(X_i \,\hat{\sqcap}_i \,  Y_i)  \,\hat{\sqcap}_i \,  Z_i \fCenter W_i$
\LeftLabel{\scriptsize A}
\UI$X_i \,\hat{\sqcap}_i \,  (Y_i  \,\hat{\sqcap}_i \,  Z_i) \fCenter W_i$
\DisplayProof
 & 
\AX$X_i \fCenter (Y_i  \,\check{\sqcup}_i\, Z_i)  \,\check{\sqcup}_i\, W_i$
\RightLabel{\scriptsize A}
\UI $X_i \fCenter Y_i  \,\check{\sqcup}_i\, (Z_i  \,\check{\sqcup}_i\,W_i)$
\DisplayProof
 \\

 & \\

\AX$X_i \fCenter Z_i$
\LeftLabel{\scriptsize W}
\UI$X_i \,\hat{\sqcap}_i \,  Y_i \fCenter Z_i$
\DisplayProof
 & 
\AX $X_i \fCenter Y_i$
\RightLabel{\scriptsize W}
\UI $X_i \fCenter Y_i  \,\check{\sqcup}_i\, Z_i$
\DisplayProof
 \\

 & \\

\AX$X_i \,\hat{\sqcap}_i \, X_i \fCenter Z_i$
\LeftLabel{\scriptsize C}
\UI $X_i \fCenter Z_i$
\DisplayProof
 & 
\AX$X_i \fCenter Y_i  \,\check{\sqcup}_i\, Y_i$
\RightLabel{\scriptsize C}
\UI $X_i \fCenter Y_i $
\DisplayProof
 \\
\end{tabular}
\end{center}

\begin{itemize}
\item Pure $\mathsf{L}_i$ type operational rules
\end{itemize}
\begin{center}
\begin{tabular}{r@{}l}

\AX$\hat{1}_i \fCenter X_i$
\LeftLabel{\scriptsize $1_i$}
\UI$1_i \fCenter X_i$
\DisplayProof
\ & \ 
\AxiomC{\phantom{$\BT \fCenter X$}}
\RightLabel{\scriptsize $1_i$}
\UI $\hat{1}_i \fCenter 1_i$
\DisplayProof
 \\

 & \\

\AxiomC{\phantom{$X_i \fCenter \BT$}}
\LeftLabel{\scriptsize $0_i$}
\UI $0_i \fCenter \check{0}_i$
\DisplayProof
 & 
\AX$X_i \fCenter \check{0}_i$
\RightLabel{\scriptsize $0_i$}
\UI$X_i \fCenter 0_i$
\DisplayProof
 \\

&\\ 

\AX$A_i \,\hat{\sqcap}_i \,  B_i \fCenter X_i$
\LeftLabel{\scriptsize$ \sqcap_i$}
\UI$A_i \sqcap_i B_i \fCenter X_i$
\DisplayProof
 & 
\AX$X_i \fCenter A_i$
\AX$Y_i \fCenter B_i$
\RightLabel{\scriptsize $\sqcap_i$}
\BI$X_i \,\hat{\sqcap}_i \,  Y_i \fCenter A_i \sqcap_i B_i$
\DisplayProof
 \\

 & \\

\AX$A_i \fCenter X_i$
\AX$B_i \fCenter Y_i$
\LeftLabel{\scriptsize $\sqcup_i$}
\BI$A_i \sqcup_i B_i\fCenter X_i \,\check{\sqcup}_i\,  Y_i$
\DisplayProof
 & 
\AX$X_i \fCenter A_i \,\check{\sqcup}_i\,  B_i$
\RightLabel{\scriptsize $\sqcup_i$}
\UI$X_i \fCenter A_i \sqcup_i  B_i $
\DisplayProof
 \\
\end{tabular}
\end{center}

\begin{itemize}
\item Multi-type structural rules
\end{itemize}
\begin{center}
\begin{tabular}{rl}
\AX$X_1 \fCenter Y_1$
\LeftLabel{\scriptsize \UD}
\doubleLine
\UI$\UD X_1 \fCenter \UD Y_1$
\DisplayProof
 & 
\AX $X_2 \fCenter Y_2$
\RightLabel{\scriptsize \DU}
\doubleLine
\UI $\DU X_2 \fCenter \DU Y_2$
\DisplayProof
 \\
 & \\
\AX$\UBOT \fCenter X_1$
\LL{\scriptsize $\DU\DBOT$}
\UI$\DU\DBOT \fCenter X_1$
\DP
 & 
\AX$X_1 \fCenter \UTOP$
\RL{\scriptsize $\DU\DTOP$}
\UI$X_1 \fCenter \DU\DTOP$
\DP

 \\
\end{tabular}
\end{center}

\begin{itemize}
\item Multi-type operational rules
\end{itemize}
\begin{center}
\begin{tabular}{rl}

\AX$\UD A_1 \fCenter X_2$
\LeftLabel{\scriptsize \ud}
\UI$\ud A_1 \fCenter X_2$
\DisplayProof
 & 
\AX $X_2 \fCenter \UD A_1$
\RightLabel{\scriptsize  \ud}
\UI $X_2 \fCenter \ud A_1$
\DisplayProof
 \\

 & \\

\AX$\DU A_2 \fCenter X_1$
\LeftLabel{\scriptsize \du}
\UI$\du A_2 \fCenter X_1$
\DisplayProof
 & 
\AX $X_1 \fCenter \DU A_2$
\RightLabel{\scriptsize  \du}
\UI $X_1 \fCenter \du A_2$
\DisplayProof
 \\

\end{tabular}
\end{center}

The multi-type display calculus D.CBL also includes the following rules:

\begin{itemize}
\item Pure $\mathsf{L}_i$ display structural rules:
\end{itemize}
\begin{center}
\begin{tabular}{rl}
\AX $\ast_i X_i \fCenter Y_i$
\LL {\scriptsize adj$\ast$}
\doubleLine
\UI $\ast_i Y_i \fCenter X_i$
\DP
 & 
\AX $X_i \fCenter \ast_iY_i$
\RL {\scriptsize adj$\ast$}
\doubleLine
\UI $Y_i \fCenter \ast_iX_i$
\DP
 \\
\end{tabular}
\end{center}

\begin{itemize}
\item Pure $\mathsf{L}_i$ structural rules:
\end{itemize}
\begin{center}
\begin{tabular}{c}
\AX $X_i \fCenter Y_i$
\LL {\scriptsize cont}
\doubleLine
\UI $\ast_i Y_i \fCenter \ast_i X_i$
\DP
 \\
\end{tabular}
\end{center}

\begin{itemize}
\item Multi-type structural rules:
\end{itemize}
\begin{center}
\begin{tabular}{cc}
\AX$\UD\ast_1 X_1 \fCenter Y_2$
\LL{\scriptsize{$\ast_2 \UD$}}
\UI$\ast_2\UD X_1 \fCenter Y_2$
\DP
 & 
\AX$X_2 \fCenter \UD \ast_1 Y_1$
\RL{\scriptsize{$\ast_2 \UD$}}
\UI$X_2 \fCenter \ast_2\UD Y_1$
\DP
 \\
\end{tabular}
\end{center}

\begin{itemize}
\item Pure $\mathsf{L}_i$ operational rules:
\end{itemize}
\begin{center}
\begin{tabular}{rl}
\AX $\ast_i X_i \fCenter Y_i$
\LL {\scriptsize $\dneg_i$}
\UI $\dneg_i X_i \fCenter Y_i$
\DP
 & 
\AX $X_i \fCenter \ast_iY_i$
\RL {\scriptsize $\dneg_i$}
\UI $X_i \fCenter \dneg_iY_i$
\DP
\end{tabular}
\end{center}

An essential feature of our calculus is that the logical rules are standard introduction rules of display calculi. This is key for achieving a canonical proof of cut elimination. The special behaviour of negation is captured by a suitable translation in a multi-type environment, which makes it possible to circumvent the technical difficulties created by the non-standard introduction rules of \cite{arieli1996reasoning}. 

\section{Properties}
\label{sec:prop}

\subsection*{Soundness}

We outline the  verification of soundness of the rules of D.BL (resp.~D.CBL) w.r.t.~the semantics of {\em perfect} HBL
(resp.~
HCBL). The first step consists in interpreting structural symbols as logical symbols according to their (precedent or succedent) position. This makes it possible to interpret sequents as inequalities, and rules as quasi-inequalities. The verification of  soundness of the rules of D.BL (resp.~D.CBL) then consists in checking the validity of their corresponding quasi-inequalities in perfect HBL (resp.~HCBL). For example, the rules on the left-hand side below are interpreted as the quasi-inequalities on the right-hand side:

\begin{center}
\begin{tabular}{r@{}c@{}l}
\AX $\DU X_2 \fCenter Y_1$
\doubleLine
\UI$X_2 \fCenter \UD Y_1$
\DP
&$\ \rightsquigarrow\ \,$&
$\forall a_1 \forall a_2 [\du (a_2) \leq_1 b_1 \Leftrightarrow a_2 \leq_2 \ud (b_1)]$
\\

 &&\\
\AX $X_i \fCenter Y_i$
\doubleLine
\UI $\ast_i Y_i \fCenter \ast_i X_i$
\DP&$\ \rightsquigarrow\ \,$&
$\forall a_i \forall b_i[a_i \leq_i b_i \Leftrightarrow \dneg_i b_i \leq_i \dneg_i a_i]$
\\
\end{tabular}
\end{center}

The verification of soundness of pure-type rules and of the introduction rules following this procedure is routine, and is omitted.
The validity of the quasi-inequalities corresponding to multi-type structural rules follows straightforwardly from the observation that the quasi-inequality corresponding to each rule is obtained by running the algorithm ALBA \cite[Section 3.4]{GMPTZ} on one of the  defining inequalities of HBL 
(resp.~HCBL
)\footnote{As discussed in \cite{GMPTZ}, the soundness of the rewriting rules of ALBA only depends on the order-theoretic properties of the interpretation of the logical connectives and their adjoints and residuals. The fact that some of these maps are not internal operations but have different domains and codomains does not make any substantial difference.}.  For instance, the soundness of the first rule above is due to $\du$ and $\ud$ being isomorphisms (by (H2) 
in Definition~\ref{def:hb}).

\subsection*{Completeness}\label{ssec: completeness}
In order to prove completeness, we shall
introduce translations 
from
sequents 
in the language of BL (resp.~CBL) into sequents 
 in the language of D.BL (resp.~D.CBL).

Let $t_1(\cdot), t_2(\cdot): \mathcal{L} \rightarrow \mathcal{L_{MT}}$ be maps between the  language $\mathcal{L}$ of BL and $\mathcal{L_{MT}}$ of D.BL  inductively defined as follows:

\begin{center}
\begin{tabular}{rcl|rcl}
$t_1(p)$ &$\, :=\, $ & $p_1$&$t_2(p)$ &$\, :=\, $ & $p_2$\\
$t_1(\bt)$& $:= $&$\utop$&$t_2(\bt)$& $:=$ &$\dbot$\\
 $t_1(\bfb)$& $:=$&$ \ubot$&$t_2(\bfb)$& $:= $&$\dtop$\\
$t_1(\bkt)$& $:=$&$ \utop $&$t_2(\bkt)$&$ :=$&$ \dtop$\\
$t_1(\bkb)$&$ :=$&$ \ubot $&$t_2(\bkb)$& := &$\dbot$\\
$t_1(A \wedge B)$ & $:= $ & $t_1(A) \uand t_1(B)$
&$t_2(A \wedge B)$ & $:= $ & $ t_2(A) \dor t_2(B) $\\
$t_1(A \vee B)$ & $:= $ & $t_1(A) \uor t_1(B)$
&$t_2(A \vee B)$ & $:= $ & $t_2(A) \dand t_2(B)$\\
$t_1(A \band B)$ & $:= $ & $t_1(A) \uand  t_1(B)$
&$t_2(A \band B)$ & $:= $ & $t_2(A) \dand t_2(B) $\\
$t_1(A \bor B)$ & $:= $ & $t_1(A) \uor t_1(B)$
&$t_2(A \bor B)$ & $:= $ & $t_2(A) \dor t_2(B)$\\
$t_1(\neg A)$& $:=$ &$\du t_2(A)$&$t_2(\neg A)$ &:= &$\ud t_1(A)$
\end{tabular}
\end{center}
A sequent
$A\vdash B$ of BL is translated as
$t_1(A) \vdash t_1(B)$  of D.BL.
For CBL we also need the following translation for the conflation connective:
\begin{center}
\begin{tabular}{rcl|rcl}
$\ t_1(- A)$ & $\, :=\, $ & $\du\dneg_2 t_2(A)$ & $t_2(- A)$ & $\, :=\, $ & $\ud \dneg_1 t_1(A)$ \\
\end{tabular}
\end{center}

The following proposition is immediate. 

\begin{proposition}
\label{prop1}
For every formula $A$ of $\mathrm{BL}$ (resp.~CBL), the sequents $t_1(A) \fCenter t_1(A) $ and $t_2(A) \fCenter t_2(A)$ are derivable in D.BL (resp.~D.CBL).
\end{proposition}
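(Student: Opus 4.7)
The plan is to proceed by simultaneous induction on the structure of the formula $A$, establishing both derivability statements at once. This simultaneity is essential because the translation clauses for $\neg$ (and, in the conflation case, $-$) cross between the two types: $t_1(\neg A) = \du t_2(A)$ depends on $t_2(A)$, while $t_2(\neg A) = \ud t_1(A)$ depends on $t_1(A)$, so neither inductive statement can be proved in isolation.

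For the base cases, if $A = p$ is atomic then $t_1(A) \vdash t_1(A)$ and $t_2(A) \vdash t_2(A)$ are instances of the pure-type identity axioms $\mathrm{Id}_1$ and $\mathrm{Id}_2$. For each constant $\bt, \bfb, \bkt, \bkb$ the translation lands in one of $\utop, \ubot, \dtop, \dbot$, and the corresponding identity sequents $1_i \vdash 1_i$ and $0_i \vdash 0_i$ are obtained by short derivations using the operational rules for $1_i$ and $0_i$ together with the structural rules $\hat{1}_i$ and $\check{0}_i$.

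For the binary connectives $\circ \in \{\wedge, \vee, \band, \bor\}$, the $t_1$- and $t_2$-translations always produce either a $\uand, \uor$ in type~$1$ or a $\dand, \dor$ in type~$2$ applied to the translations of the immediate subformulas. Starting from the induction hypotheses $t_i(A) \vdash t_i(A)$ and $t_i(B) \vdash t_i(B)$, the identity sequent for the compound formula is derived by a standard introduction-rule pattern: apply the right introduction rule ($\sqcap_i$ right or $\sqcup_i$ left respectively) followed by the matching left rule after displaying with the pure $\mathsf{L}_i$-type display rule \emph{res}. This is entirely routine.

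The negation case is the only step that uses the multi-type rules in an essential way. Given $t_2(A) \vdash t_2(A)$ by induction hypothesis, apply the left operational rule for $\du$ to replace $t_2(A)$ in precedent position by $\DU t_2(A)$, use the multi-type display rule \emph{adj} to move to $t_2(A) \vdash \UD\du t_2(A)$, and then the right operational rule for $\ud$ in the succedent form is unavailable because we want $\du$ on both sides; in practice one uses the right $\du$ rule after two applications of \emph{adj} to obtain $\du t_2(A) \vdash \du t_2(A)$, which is $t_1(\neg A) \vdash t_1(\neg A)$. The dual pattern, using the induction hypothesis $t_1(A) \vdash t_1(A)$ with the rules for $\ud$, yields $t_2(\neg A) \vdash t_2(\neg A)$. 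The conflation case for D.CBL is completely analogous: from $t_2(A) \vdash t_2(A)$ one first applies the pure $\mathsf{L}_2$ \emph{cont}/\emph{adj$\ast$}/$\dneg_2$ rules to derive $\dneg_2 t_2(A) \vdash \dneg_2 t_2(A)$, and then proceeds as in the negation case to obtain $\du \dneg_2 t_2(A) \vdash \du \dneg_2 t_2(A)$, i.e.\ $t_1(-A) \vdash t_1(-A)$, with the mirror image for $t_2(-A)$.

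The only subtlety worth flagging is bookkeeping: one must keep the two inductive clauses exactly in step and check that every use of a multi-type or negation rule is applicable to the displayed form of the sequent. Beyond this, no real difficulty arises, which is exactly why the proposition is stated as immediate.
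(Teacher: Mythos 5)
Your overall strategy coincides with the paper's: a single induction on the complexity of $A$ establishing $t_1(A)\vdash t_1(A)$ and $t_2(A)\vdash t_2(A)$ together, with the identity axiom for atoms, the introduction rules for the lattice connectives, and the multi-type machinery for $\neg$ and $-$. Your remark that the two claims must be carried simultaneously because $t_1(\neg A)$ depends on $t_2(A)$ and vice versa is exactly the point, even though the paper leaves it implicit.

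The one place where your sketch does not assemble into a derivation is the negation case, which you yourself identify as the only essential use of the multi-type rules. The left operational rule for $\du$ has premise $\DU A_2 \vdash X_1$ and conclusion $\du A_2 \vdash X_1$: it removes a structural $\DU$ already present, it does not introduce one, so it cannot be applied to the pure type-$2$ sequent $t_2(A)\vdash t_2(A)$. Likewise neither adjunction rule applies to that sequent, since it contains no occurrence of $\DU$ or $\UD$; the chain of ``adj'' steps you describe cannot get started. What is actually needed --- and what the paper uses --- is the multi-type \emph{structural} rule $\DU$, i.e.\ the congruence rule taking $X_2\vdash Y_2$ to $\DU X_2 \vdash \DU Y_2$; after that, one application of the right operational $\du$ rule and one of the left operational $\du$ rule yield $\du t_2(A)\vdash \du t_2(A)$, and dually with $\UD$ and $\ud$ for $t_2(\neg A)$. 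The same structural rule is needed again at the end of your conflation case. This is a small and entirely fixable omission, but as written the step would fail. Two minor further remarks: for the binary connectives the display rule res is not needed (chaining the right and then the left introduction rule of $\sqcap_i$, or dually of $\sqcup_i$, already closes the case, whereas the paper's own derivation detours through weakening, exchange and contraction); and your derivation of $\dneg_2 t_2(A)\vdash\dneg_2 t_2(A)$ from cont followed by the two $\dneg_2$ operational rules matches the paper's exactly.
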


\begin{proof}
By induction on the complexity of the formula $A$. If $A$ is an atomic formula, the translation of $t_i(A) \fCenter t_i(A)$ with $i\in\{1,2\}$ is $A_i \fCenter A_i$, hence it is derivable using (Id) in $\mathsf{L_1}$ and $\mathsf{L_2}$, respectively. If $A = A_1 \band A_2$, then $t_i(A_1 \band A_2) = t_i(A_1) \uand t_i(A_2)$ and if $A=A_1 \bor A_2$, then $t_i(A_1 \bor A_2) = t_i(A_1) \uor t_i(A_2)$. By induction hypothesis, $t_i(A_i) \fCenter t_i(A_i) $. So, it is enough to show that:
{
\[
\AX $t_i(A_1) \fCenter t_i(A_1) $
\UI $t_i(A_1) \hat{\sqcap}_i t_i(A_2) \fCenter t_i(A_1) $
\AX $t_i(A_2) \fCenter t_i(A_2) $
\UI $t_i(A_2) \hat{\sqcap}_i t_i(A_1) \fCenter t_i(A_2) $
\LL{\scriptsize $\mathrm{E}$}
\UI $t_i(A_1) \hat{\sqcap}_i t_i(A_2) \fCenter t_i(A_2) $
\BI $(t_i(A_1) \hat{\sqcap}_i t_i(A_2)) \hat{\sqcap}_i (t_i(A_1) \hat{\sqcap}_i t_i(A_2))  \fCenter t_i(A_1) \sqcap_i t_i(A_2)$
\LL{\scriptsize $\mathrm{W}$}
\UI $t_i(A_1)  \hat{\sqcap}_i t_i(A_2)  \fCenter t_i(A_1) \sqcap_i t_i(A_2)$
\UI $t_i(A_1) \sqcap_i t_i(A_2)  \fCenter t_i(A_1) \sqcap_i t_i(A_2)$
\DP
\]

\[
\AX $t_i(A_1) \fCenter t_i(A_1) $
\UI $t_i(A_1) \fCenter t_i(A_1)  \UOR t_i(A_2)$
\AX $t_i(A_2) \fCenter t_i(A_2) $
\UI $t_i(A_2)  \fCenter t_i(A_2) \UOR t_i(A_1)$
\RL{\scriptsize $\mathrm{E}$}
\UI $t_i(A_1) \fCenter t_i(A_1)  \UOR t_i(A_2) $
\BI $t_i(A_1) \uor t_i(A_2) \fCenter (t_i(A_1)  \UOR t_i(A_2)) \UOR (t_i(A_1)  \UOR t_i(A_2))$
\RL{\scriptsize $\mathrm{C}$}
\UI $t_i(A_1) \uor t_i(A_2) \fCenter t_i(A_1)  \UOR t_i(A_2)$
\UI $t_i(A_1) \uor t_i(A_2)  \fCenter t_i(A_1) \uor t_i(A_2)$
\DP
\]
}

The arguments for $A = A_1 \wedge A_2$ and $A = A_1 \vee A_2$ are similar and they are omitted.

If $A = \neg B$, then $t_1(\neg B) = \du t_2(B)$ and $t_2(\neg B) = \ud t_1(B)$. By induction hypothesis $t_i(A) \fCenter t_i(A)$. Hence it is enough to show that:
\[
\begin{tabular}{cc}
\AX $t_2(B) \fCenter t_2(B) $
\RL{\scriptsize $\DU$}
\UI$\DU t_2(B) \fCenter \DU t_2(B) $
\UI$\DU t_2(B) \fCenter \du t_2(B) $
\UI$\du t_2(B) \fCenter \du t_2(B) $
\DP
 & 
\AX $t_1(B) \fCenter t_1(B) $
\LL{\scriptsize $\UD$}
\UI$\UD t_1(B) \fCenter \UD t_1(B) $
\UI$\UD t_1(B) \fCenter \ud t_1(B) $
\UI$\ud t_1(B) \fCenter \ud t_1(B) $
\DP
\end{tabular}
\]

If $A = -B$, then $t_1(- B) = \du\sim_2t_2(B)$ and $t_2(- B) = \ud\sim_1 t_1(B)$. By induction hypothesis $t_i(B) \fCenter t_i(B)$. Hence it is enough to show that:
\[
\begin{tabular}{cc}
\AX $t_2(B) \fCenter t_2(B) $
\RL{\scriptsize cont}
\UI $\ast_2 t_2(B) \fCenter \ast_2 t_2(B) $
\UI $\ast_2 t_2(B) \fCenter \sim_2 t_2(B) $
\UI $\sim_2 t_2(B) \fCenter \sim_2 t_2(B) $
\RL{\scriptsize \DU}
\UI $\DU\sim_2 t_2(B) \fCenter \DU\sim_2 t_2(B) $
\UI $\du\sim_2 t_2(B) \fCenter \DU\sim_2 t_2(B) $
\UI $\du\sim_2 t_2(B) \fCenter \du\sim_2 t_2(B) $
\DP
 & 
\AX $t_1(B) \fCenter t_1(B) $
\RL{\scriptsize cont}
\UI $\ast_1 t_1(B) \fCenter \ast_1 t_1(B) $
\UI $*_1 t_1(B) \fCenter \sim_1 t_1(B) $
\UI $\sim_1 t_2(B) \fCenter \sim_1 t_1(B) $
\LL{\scriptsize \UD}
\UI $\UD\sim_1 t_1(B) \fCenter \UD\sim_1 t_1(B) $
\UI $\ud\sim_1 t_1(B) \fCenter \ud\sim_1 t_1(B) $
\UI $\ud\sim_1 t_1(B) \fCenter \ud\sim_1 t_1(B) $
\DP
\end{tabular}
\]

\end{proof}

\begin{proposition}
\label{prop2}
For all formulas $A, B$ of $\mathrm{BL}$ (resp.~CBL), if $A \fCenter B$ is derivable in BL (resp.~CBL), then $t_1(A) \fCenter t_1(B) $  is derivable in D.BL (resp.~D.CBL).
\end{proposition}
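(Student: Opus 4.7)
The proof proceeds by induction on the length of the derivation of $A \vdash B$ in BL (resp.\ CBL). The identity axiom $A \vdash A$ is handled by Proposition~\ref{prop1}; the transitivity rule is simulated directly by the Cut rule of D.BL (resp.\ D.CBL); and the binary introduction rules of BL for $\wedge, \vee, \band, \bor$ translate uniformly to applications of the pure $\mathsf{L}_1$-type operational rules for $\sqcap_1$ and $\sqcup_1$, since by definition $t_1$ sends both $\wedge$ and $\band$ to $\uand$ and both $\vee$ and $\bor$ to $\uor$. The proof thereby reduces to exhibiting, for each axiom scheme of BL (resp.\ CBL), a derivation in the calculus of its translated sequent.

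The purely lattice-theoretic axioms---the projection axioms ($A \wedge B \vdash A$ and its $\band$-variants), the distributivity axioms, and the bound axioms---translate under $t_1$ into sequents derivable using only Proposition~\ref{prop1} and the pure $\mathsf{L}_1$-type structural and operational rules. In particular, the bilattice-top axioms $A \vdash \bt$ and $A \vdash \bkt$ both translate to the single sequent $t_1(A) \vdash \utop$, and symmetrically for the bottoms, so the four bilattice-bound axioms collapse under $t_1$ into two sequents handled by the $1_1, 0_1$ rules. The negated-bound axioms, such as $A \vdash \neg \bkt$, unfold into sequents of the form $t_1(A) \vdash \du \dtop$; I plan to derive these by first obtaining $t_1(A) \vdash \UTOP$ via weakening, passing to $\DU \DTOP$ via the multi-type structural rule $\DU\DTOP$, introducing $\dtop$ from $\DTOP$ via the $1_2$ operational rule, and finally introducing $\du$ on the right via the $\du$ operational rule.

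The substantial step is the treatment of the negation axioms. The involution $\neg\neg A \dashv\vdash A$ translates to $\du \ud\, t_1(A) \dashv\vdash t_1(A)$, which I derive in both directions from $t_1(A) \vdash t_1(A)$ by successive applications of the $\UD$ congruence, the $\ud$ operational rule, the multi-type display rule adj, and the $\du$ operational rule; semantically, this step encodes that $\du$ and $\ud$ are mutually inverse, as enforced by (H2). The four De Morgan laws for negation translate under $t_1$ into sequents of the form $\du(X) \dashv\vdash Y$, where $X$ is a type-$2$ meet or join of $t_2(A)$ and $t_2(B)$ and $Y$ is the corresponding type-$1$ meet or join of $\du\, t_2(A)$ and $\du\, t_2(B)$; in each case the derivation requires $\du$ to be pushed across the lattice connective. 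I plan to handle these by first establishing the auxiliary sequents $A_2 \vdash \UD \du A_2$ and $\UD \du A_2 \vdash A_2$ (derivable in the same style as the involution axiom), and then combining them via the operational rules for $\sqcap_2, \sqcup_2$, the $\DU$ and $\UD$ congruences, the adjunction, weakening and contraction, so as to distribute $\du$ across the appropriate type-$2$ connective.

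For CBL the extension is parallel. The involution $--A \dashv\vdash A$ and the De Morgan laws for $-$ are handled using the pure $\mathsf{L}_i$-type rules adj$\ast$ and cont for $\ast_i$, together with the involutivity of $\dneg_i$ forced by the De Morgan structure on $\mathsf{L}_i$; the bound axioms for $-$ reduce to those of BL after the $\ast_i$ rules are applied. The commutation axiom $-\neg A \dashv\vdash \neg - A$ is where the multi-type structural rule $\ast_2 \UD$ (and its dual, derivable from adj) plays its role, syntactically mirroring the semantic identity $\du \circ \dneg_2 = \dneg_1 \circ \du$ that is forced in HCBL. I expect the main obstacle throughout to be the bookkeeping in the De Morgan derivations for negation, where $\du$ must be pushed across lattice connectives of different types: the key insight that makes these derivations succeed is that $\DU$ and $\UD$ are mutual adjoints on both sides, so that both preserve all finite meets and joins, a property which becomes syntactically accessible through the combined use of the display (adj) rules and Cut.
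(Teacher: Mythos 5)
Your proposal is correct and follows essentially the same route as the paper's proof: reduce the claim to deriving the $t_1$-translations of the BL/CBL axioms and rules, discharge identity by Proposition~\ref{prop1} and transitivity by Cut, handle the lattice axioms with pure $\mathsf{L}_1$-type rules (exploiting that $t_1$ identifies $\wedge$ with $\band$ and $\vee$ with $\bor$), and derive the involution, De Morgan, conflation and commutation axioms by precisely the combination of the $\UD$/$\DU$ congruence rules, the adjunction rules adj and adj$\ast$, cont, $\ast_2\UD$, weakening and contraction that you describe. The only wobble is your sketched route for the negated-constant axioms (e.g.\ $A\vdash\neg\bkt \rightsquigarrow t_1(A)\vdash \du\dtop$): weakening cannot produce $\UTOP$ in succedent position, and there is no rule introducing $1_2$ in succedent position underneath $\DU$; a derivation that does go through starts from the axiom $\hat{1}_2\vdash 1_2$, weakens $\UD\, t_1(A)$ into the antecedent, discharges $\hat{1}_2$ by the $\hat{1}_i$-rule, and then applies adj followed by the right $\du$-rule --- but this concerns a step the paper itself omits as straightforward and does not affect the correctness of your overall argument.
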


\begin{proof}

In what follows we show that the translations of the axioms and rules of BL (resp~C.BL) are derivable in D.BL (resp.~D.CBL). Since BL (resp~C.BL) is complete w.r.t. the class of bilattice algebras (by Theorem \ref{thm:com}), and hence w.r.t their associated heterogeneous algebras (by Propositions \ref{prop:from single to multi} and \ref{prop:reverse engineering}), this is enough to show the completeness of D.BL (resp.~D.CBL). 
For the sake of readability, although each formula $A$ in precedent (resp.~succedent) position should be written as $t_1(A)$, we suppress it in the derivation trees of the axioms. 

The Identity axiom $A \vdash A$ is proved in Proposition \ref{prop1}. 

The derivations of the binary rules are standard and we omit them. 

The translations of the axioms $\bfb \fCenter A$, $A \fCenter \bt$, $\bkb \fCenter A$, $A \fCenter \bkt$ are $0_1 \fCenter A_1$, $A_1 \fCenter 1_1$, $0_1 \fCenter A_1$, and $A_1 \fCenter 1_1$, respectively. The derivations are straightforward and they are omitted, in particular they make use of the introduction rules of $1_1$ and $0_1$, Weakening (W) and the structural rules for the neutral element ($\UTOP$ and $\UBOT$).

The translations of the axioms $A \fCenter \neg\bfb$, $\neg\bt \fCenter A$, $\neg\bkb \fCenter A$, $A \fCenter \neg\bkt$ are $A_1 \fCenter \du 1_2$, $\du 0_2 \fCenter A_1$, $\du 0_2 \fCenter A_1,$ and $A_1 \fCenter \du 1_2$, respectively. The derivations are straightforward and they are omitted, in particular they make use of the introduction rules of $1_2$ and $0_2$, the structural rules $\DU\DBOT$ and $\DU\DTOP$. 

$\neg\neg A \dashv\vdash A \ \  \rightsquigarrow \ \ \du\ud A_1 \fCenter A_1$ and $A_1 \dashv\vdash \du\ud A_1 $

\begin{center}
{\fns 
\begin{tabular}{cc}
\AXC{$ A_1  \fCenter A_1$}
\RL{\scriptsize \UD}
\UIC{$ \UD A_1  \fCenter \UD A_1$}
\UIC{$ \ud A_1  \fCenter \UD A_1$}
\LL{\scriptsize adj}
\UIC{$\DU\ud A_1  \fCenter A_1$}
\UIC{$\du\ud A_1  \fCenter A_1$}
\DP
&
\AXC{$ A_1  \fCenter A_1$}
\LL{\scriptsize \UD}
\UIC{$ \UD A_1  \fCenter \UD A_1$}
\UIC{$ \UD A_1  \fCenter \ud A_1$}
\RL{\scriptsize adj}
\UIC{$A_1  \fCenter \DU\ud A_1$}
\UIC{$A_1  \fCenter \du\ud A_1$}
\DP
\end{tabular}
 }
\end{center}

$-- A \dashv\vdash A \ \ \rightsquigarrow \ \ \du\sim_2\ud\sim_1 A_1 \dashv\vdash A_1$ 

\begin{center}
{\fns 
\begin{tabular}{cc}
\AX$ A_1  \fCenter A_1$
\LL{\scriptsize cont}
\UI$*_1 A_1 \fCenter *_1 A_1$
\UI$\ast_1 A_1 \fCenter \sim_1 A_1$
\LL{\scriptsize \UD}
\UI$\UD*_1 A_1 \fCenter \UD\sim_1 A_1$
\UI$\UD*_1 A_1 \fCenter \ud\sim_1 A_1$
\LL{\scriptsize {$*_2 \UD$}}
\UI$*_2\UD A_1 \fCenter \ud\sim_1 A_1$
\LL{\scriptsize adj$\ast$}
\UI$*_2\ud\sim_1 A_1 \fCenter \UD A_1$
\UI$\sim_2\ud\sim_1 A_1 \fCenter \UD A_1$
\LL{\scriptsize adj}
\UI$\DU\sim_2\ud\sim_1 A_1 \fCenter A_1$
\UI$\du\sim_2\ud\sim_1 A_1 \fCenter A_1$
\DP
&
\AX$ A_1  \fCenter A_1$
\LL{\scriptsize cont}
\UI$*_1 A_1 \fCenter *_1 A_1$
\UI$\sim_1 A_1 \fCenter *_1 A_1$
\LL{\scriptsize \UD}
\UI$\UD\sim_1 A_1 \fCenter \UD*_1 A_1$
\UI$\ud\sim_1 A_1 \fCenter \UD*_1 A_1$
\RL{\scriptsize{$*_2 \UD$}}
\UI$\ud\sim_1 A_1 \fCenter *_2\UD A_1$
\RL{\scriptsize adj$\ast$}
\UI$\UD A_1 \fCenter *_2\ud\sim_1 A_1$
\UI$\UD A_1 \fCenter \sim_2\ud\sim_1 A_1$
\RL{\scriptsize adj}
\UI$A_1 \fCenter \DU\sim_2\ud\sim_1 A_1$
\UI$A_1 \fCenter \du\sim_2\ud\sim_1 A_1$
\DP
\end{tabular}
 }
\end{center}

$-\neg A \dashv\vdash \neg- A \ \ \rightsquigarrow \ \ \du\sim_2\ud A_1 \dashv\vdash \du\ud\sim_1 A_1$

\begin{center}
{\fns 
\begin{tabular}{cc}
\AX$ A_1  \fCenter A_1$
\RL{\scriptsize cont}
\UI$ *_1 A_1 \fCenter *_1 A_1$
\UI$ *_1 A_1 \fCenter \sim_1 A_1$
\LL{\scriptsize *}
\UI$ *_1\sim_1 A_1 \fCenter A_1$
\LL{\scriptsize  \UD}
\UI$ \UD*_1\sim_1 A_1 \fCenter \UD A_1$
\UI$ \UD*_1\sim_1 A_1 \fCenter \ud A_1$
\LL{\scriptsize {$\ast_2 \UD$}}
\UI$*_2 \UD\sim_1 A_1 \fCenter \ud A_1$
\LL{\scriptsize adj$\ast$}
\UI$*_2\ud A_1 \fCenter \UD\sim_1 A_1$
\UI$\sim_2\ud A_1 \fCenter \UD\sim_1 A_1$
\UI$\sim_2\ud A_1 \fCenter \ud\sim_1 A_1$
\RL{\scriptsize \DU}
\UI$\DU\sim_2\ud A_1 \fCenter \DU\ud\sim_1 A_1$
\UI$\DU\sim_2\ud A_1 \fCenter \du\ud\sim_1 A_1$
\UI$\du\sim_2\ud A_1 \fCenter \du\ud\sim_1 A_1$
\DP
&
\AX$ A_1  \fCenter A_1$
\RL{\scriptsize cont}
\UI$*_1 A_1 \fCenter  *_1 A_1$
\UI$\sim_1 A_1 \fCenter  *_1 A_1$
\RL{\scriptsize adj$\ast$}
\UI$A_1 \fCenter  *_1\sim_1 A_1$
\LL{\scriptsize {$\UD$}}
\UI$\UD A_1 \fCenter  \UD*_1\sim_1 A_1$
\UI$\ud A_1 \fCenter  \UD*_1\sim_1 A_1$
\RL{\scriptsize {$\ast_2 \UD$}}
\UI$\ud A_1 \fCenter *_2 \UD\sim_1 A_1$
\RL{\scriptsize adj$\ast$}
\UI$ \UD\sim_1 A_1 \fCenter *_2\ud A_1$
\UI$ \ud\sim_1 A_1 \fCenter *_2\ud A_1$
\UI$ \ud\sim_1 A_1 \fCenter \sim_2\ud A_1$
\RL{\scriptsize \DU}
\UI$ \DU\ud\sim_1 A_1 \fCenter \DU\sim_2\ud A_1$
\UI$ \DU\ud\sim_1 A_1 \fCenter \du\sim_2\ud A_1$
\UI$ \du\ud\sim_1 A_1 \fCenter \du\sim_2\ud A_1$
\DP
\end{tabular}
 }
\end{center}

$ \neg A  \wedge \neg B \dashv\vdash \neg (A \vee B) \ \ \rightsquigarrow \ \ \du A_2 \uand \du B_2 \dashv\vdash \du(A_2 \dand B_2) $ \ \ and 

$\neg A  \band \neg B \dashv\vdash \neg (A \band B) \ \ \rightsquigarrow \ \ \du A_2 \uand \du B_2 \dashv\vdash \du(A_2 \dand B_2) $

\begin{center}
{
\begin{tabular}{c}
\AX $A_2 \fCenter A_2$
\RL{\scriptsize $\DU$}
\UI $\DU A_2 \fCenter \DU A_2$
\UI $\du A_2 \fCenter \DU A_2$
\LL{\scriptsize W}
\UI $\du A_2 \UAND \du B_2 \fCenter \DU A_2$
\UI $\du A_2 \uand \du B_2 \fCenter \DU A_2$
\RL{\scriptsize adj}
\UI $\UD(\du A_2 \uand \du B_2) \fCenter A_2$
\AX $B_2 \fCenter B_2$
\RL{\scriptsize $\DU$}
\UI $\DU B_2 \fCenter \DU B_2$
\UI $\du B_2 \fCenter \DU B_2$
\LL{\scriptsize W}
\UI $\du B_2 \UAND \du A_2 \fCenter \DU B_2$
\LL{\scriptsize E}
\UI $\du A_2 \uand \du B_2 \fCenter \DU B_2$
\RL{\scriptsize adj}
\UI $\UD(\du A_2 \uand \du B_2) \fCenter B_2$
\BI $\UD(\du A_2 \uand \du B_2) \DAND \UD(\du A_2 \uand \du B_2) \fCenter A_2 \dand B_2 $
\LL{\scriptsize C}
\UI $\UD(\du A_2 \uand \du B_2) \fCenter A_2 \dand B_2 $
\RL{\scriptsize adj}
\UI $\du A_2 \uand \du B_2 \fCenter \DU(A_2 \dand B_2) $
\UI $\du A_2 \uand \du B_2 \fCenter \du(A_2 \dand B_2) $
\DP
\end{tabular}
 }
\end{center}


\begin{center}
{
\begin{tabular}{c}
\AX$A_2 \fCenter A_2$
\LL{\scriptsize W}
\UI$A_2 \DAND B_2 \fCenter A_2$
\UI$A_2 \dand B_2 \fCenter A_2$
\RL{\scriptsize $\DU$}
\UI$\DU(A_2 \dand B_2) \fCenter \DU A_2$
\UI$\DU(A_2 \dand B_2) \fCenter \du A_2$
\UI$\du(A_2 \dand B_2) \fCenter \du A_2$
\AX$B_2 \fCenter B_2$
\LL{\scriptsize W}
\UI$B_2 \DAND A_2 \fCenter B_2$
\LL{\scriptsize E}
\UI$A_2 \dand B_2 \fCenter B_2$
\RL{\scriptsize $\DU$}
\UI$\DU(A_2 \dand B_2) \fCenter \DU B_2$
\UI$\DU(A_2 \dand B_2) \fCenter \du B_2$
\UI$\du(A_2 \dand B_2) \fCenter \du B_2$
\BI $\du(A_2 \dand B_2) \UAND \du(A_2 \dand B_2)  \fCenter \du A_2 \uand \du B_2$
\LL{\scriptsize C}
\UI $\du(A_2 \dand B_2)  \fCenter \du A_2 \uand \du B_2$
\DP
\end{tabular}
 }
\end{center}

$ \neg (A  \wedge B) \dashv\vdash \neg A \vee \neg B \ \ \rightsquigarrow \ \ \du (A_2 \dor B_2) \dashv\vdash \du A_2 \uor \du B_2 $\ \ and

$\neg (A \bor B)  \dashv\vdash \neg A  \bor \neg B\ \ \rightsquigarrow \ \ \du (A_2 \dor B_2) \dashv\vdash \du A_2 \uor \du B_2 $

\begin{center}
{
\begin{tabular}{c}
\AX$A_2 \fCenter A_2$
\RL{\scriptsize $\DU$}
\UI$\DU A_2 \fCenter \DU A_2$
\UI$\DU A_2 \fCenter \du A_2$
\RL{\scriptsize W}
\UI$\DU A_2 \fCenter \du A_2 \UOR \du B_2$
\LL{\scriptsize adj}
\UI$A_2 \fCenter \UD(\du A_2 \UOR \du B_2)$

\AX$B_2 \fCenter B_2$
\RL{\scriptsize $\DU$}
\UI$\DU B_2 \fCenter \DU B_2$
\UI$\DU B_2 \fCenter \du B_2$
\RL{\scriptsize W}
\UI$\DU A_2 \fCenter \du B_2 \UOR \du A_2$
\RL{\scriptsize E}
\UI$\DU A_2 \fCenter \du A_2 \UOR \du B_2$
\LL{\scriptsize adj}
\UI$A_2 \fCenter \UD(\du A_2 \UOR \du B_2)$
\BI $A_2 \dor B_2 \fCenter \UD(\du A_2 \UOR \du B_2) \UOR \UD(\du A_2 \UOR \du B_2) $
\RL{\scriptsize C}
\UI $A_2 \dor B_2 \fCenter \UD(\du A_2 \UOR \du B_2) $
\LL{\scriptsize adj}
\UI $\DU (A_2 \dor B_2) \fCenter \du A_2 \UOR \du B_2 $
\UI $\du (A_2 \dor B_2) \fCenter \du A_2 \UOR \du B_2 $
\UI $\du (A_2 \dor B_2) \fCenter \du A_2 \uor \du B_2 $
\DP
\end{tabular}
 }
\end{center}


\begin{center}
{
\begin{tabular}{c}
\AX$A_2 \fCenter A_2$
\RL{\scriptsize $\DU$}
\UI $\DU A_2  \fCenter \DU A_2 $
\UI $\du A_2  \fCenter \DU A_2 $
\RL{\scriptsize adj}
\UI $\UD\du A_2  \fCenter A_2 $
\RL{\scriptsize W}
\UI $\UD\du A_2  \fCenter A_2 \DOR B_2$
\UI $\UD\du A_2  \fCenter A_2 \dor B_2$
\RL{\scriptsize adj}
\UI $\du A_2  \fCenter \DU (A_2 \dor B_2)$
\UI $\du A_2  \fCenter \du (A_2 \dor B_2)$

\AX$B_2 \fCenter B_2$
\RL{\scriptsize $\DU$}
\UI $\DU B_2  \fCenter \DU B_2 $
\UI $\du B_2  \fCenter \DU B_2 $
\RL{\scriptsize adj}
\UI $\UD\du B_2  \fCenter B_2 $
\RL{\scriptsize W}
\UI $\UD\du B_2  \fCenter B_2 \DOR A_2$
\RL{\scriptsize E}
\UI $\UD\du B_2  \fCenter A_2 \dor B_2$
\RL{\scriptsize adj}
\UI $\du B_2  \fCenter \DU (A_2 \dor B_2)$
\UI $\du B_2  \fCenter \du (A_2 \dor B_2)$
\BI $\du A_2 \uor \du B_2 \fCenter \du (A_2 \dor B_2) \UOR \du (A_2 \dor B_2)$
\RL{\scriptsize C}
\UI $\du A_2 \uor \du B_2 \fCenter \du (A_2 \dor B_2)$
\DP
\end{tabular}
 }
\end{center}

$-(A \wedge B)  \dashv\vdash - A  \wedge - B \ \ \rightsquigarrow \ \ \du\sim_2(A_2 \dor B_2) \dashv\vdash \du\sim_2A_2 \uand \du\sim_2B_2$

\begin{center}
{
\begin{tabular}{c}
\AXC{$ A_2  \fCenter A_2$}
\RL{\scriptsize W}
\UI $A_2   \fCenter  A_2 \DOR B_2$
\UI $A_2   \fCenter  A_2 \dor B_2$
\LL{\scriptsize cont}
\UI $ *_2(A_2 \dor B_2)  \fCenter *_2A_2$
\UI $ *_2(A_2 \dor B_2)  \fCenter \sim_2A_2$
\UI $ \sim_2(A_2 \dor B_2)  \fCenter \sim_2A_2$
\RL{\scriptsize \DU}
\UI $ \DU\sim_2(A_2 \dor B_2)  \fCenter \DU\sim_2A_2$
\UI $ \DU\sim_2(A_2 \dor B_2)  \fCenter \du\sim_2A_2$
\UI $ \du\sim_2(A_2 \dor B_2)  \fCenter \du\sim_2A_2$
\AXC{$ B_2  \fCenter B_2$}
\RL{\scriptsize W}
\UI $B_2   \fCenter  B_2 \DOR A_2$
\RL{\scriptsize E}
\UI $B_2   \fCenter  A_2 \DOR B_2$
\UI $B_2   \fCenter  A_2 \dor B_2$
\LL{\scriptsize cont}
\UI $ *_2(A_2 \dor B_2)  \fCenter *_2B_2$
\UI $ *_2(A_2 \dor B_2)  \fCenter \sim_2B_2$
\UI $ \sim_2(A_2 \dor B_2)  \fCenter \sim_2B_2$
\RL{\scriptsize \DU}
\UI $ \DU\sim_2(A_2 \dor B_2)  \fCenter \DU\sim_2B_2$
\UI $ \DU\sim_2(A_2 \dor B_2)  \fCenter \du\sim_2B_2$
\UI $ \du\sim_2(A_2 \dor B_2)  \fCenter \du\sim_2B_2$
\BIC{$ \du\sim_2(A_2 \dor B_2) \UAND  \du\sim_2(A_2 \dor B_2) \fCenter \du\sim_2A_2 \uand \du\sim_2B_2$}
\LL{\scriptsize C}
\UIC{$ \du\sim_2(A_2 \dor B_2) \fCenter \du\sim_2A_2 \uand \du\sim_2B_2$}
\DP
\end{tabular}
 }
\end{center}


\begin{center}
{
\begin{tabular}{c}
\AXC{$ A_2  \fCenter A_2$}
\RL{\scriptsize W}
\UIC{$ A_2  \fCenter A_2 \DOR B_2$}
\UIC{$ A_2  \fCenter A_2 \dor B_2$}
\LL{\scriptsize cont}
\UIC{$ \ast_2(A_2 \dor B_2)  \fCenter \ast_2A_2$}
\UIC{$ \ast_2(A_2 \dor B_2)  \fCenter \sim_2A_2$}
\UIC{$ \sim_2(A_2 \dor B_2)  \fCenter \sim_2A_2$}
\RL{\scriptsize \DU}
\UIC{$ \DU\sim_2(A_2 \dor B_2)  \fCenter \DU\sim_2A_2$}
\UIC{$ \DU\sim_2(A_2 \dor B_2)  \fCenter \du\sim_2A_2$}
\UIC{$ \du\sim_2(A_2 \dor B_2)  \fCenter \du\sim_2A_2$}
\AXC{$ B_2  \fCenter B_2$}
\RL{\scriptsize W}
\UIC{$ B_2  \fCenter B_2 \DOR A_2$}
\RL{\scriptsize E}
\UIC{$ B_2  \fCenter A_2 \DOR B_2$}
\UIC{$ B_2  \fCenter A_2 \dor B_2$}
\LL{\scriptsize cont}
\UIC{$ \ast_2(A_2 \dor B_2)  \fCenter \ast_2B_2$}
\UIC{$ \ast_2(A_2 \dor B_2)  \fCenter \sim_2B_2$}
\UIC{$ \sim_2(A_2 \dor B_2)  \fCenter \sim_2B_2$}
\RL{\scriptsize \DU}
\UIC{$ \DU\sim_2(A_2 \dor B_2)  \fCenter \DU\sim_2B_2$}
\UIC{$ \DU\sim_2(A_2 \dor B_2)  \fCenter \du\sim_2B_2$}
\UIC{$ \du\sim_2(A_2 \dor B_2)  \fCenter \du\sim_2B_2$}
\BIC{$ \du\sim_2(A_2 \dor B_2) \UAND \du\sim_2(A_2 \dor B_2) \fCenter \du\sim_2A_2 \uand \du\sim_2B_2$}
\LL{\scriptsize C}
\UI $\du\sim_2A_2 \uand \du\sim_2B_2 \fCenter \du\sim_2(A_2 \dor B_2)$
\DP
\end{tabular}
 }
\end{center}

\newpage
$-(A \band B) \dashv\vdash - A  \bor - B \ \ \rightsquigarrow \ \ \du\sim_2(A_2 \dand B_2) \dashv\vdash \du\sim_2A_2 \uor \du\sim_2B_2$

\begin{center}
{
\begin{tabular}{c}
\!\!\!\!\!\!\!\!\!\!\!\!\!\!\!\!\!\!\!\!\!\!
\AX$A_2 \fCenter A_2 $
\LL{\scriptsize cont}
\UI$\ast_2 A_2 \fCenter \ast_2A_2 $
\UI$\ast_2 A_2 \fCenter \sim_2A_2 $
\RL{\scriptsize \DU}
\UI$\DU\ast_2 A_2 \fCenter \DU\sim_2A_2 $
\UI$\DU\ast_2 A_2 \fCenter \du\sim_2A_2 $
\RL{\scriptsize W}
\UI$\DU\ast_2 A_2 \fCenter \du\sim_2A_2 \UOR \du\sim_2B_2 $
\UI$\DU\ast_2 A_2 \fCenter \du\sim_2A_2 \uor \du\sim_2B_2 $
\LL{\scriptsize adj}
\UI$\ast_2 A_2 \fCenter \UD(\du\sim_2A_2 \uor \du\sim_2B_2) $
\LL{\scriptsize adj$\ast$}
\UIC{$\ast_2 \UD(\du\sim_2A_2 \uor \du\sim_2B_2)  \fCenter A_2$}
\AX$B_2 \fCenter B_2 $
\LL{\scriptsize cont}
\UI$\ast_2 B_2 \fCenter \ast_2B_2 $
\UI$\ast_2 B_2 \fCenter \sim_2B_2 $
\RL{\scriptsize \DU}
\UI$\DU\ast_2 B_2 \fCenter \DU\sim_2B_2 $
\UI$\DU\ast_2 B_2 \fCenter \du\sim_2B_2 $
\RL{\scriptsize W}
\UI$\DU\ast_2 B_2 \fCenter \du\sim_2B_2 \UOR \du\sim_2A_2 $
\RL{\scriptsize E}
\UI$\DU\ast_2 B_2 \fCenter \du\sim_2A_2 \UOR \du\sim_2B_2 $
\UI$\DU\ast_2 B_2 \fCenter \du\sim_2A_2 \uor \du\sim_2B_2 $
\LL{\scriptsize adj}
\UI$\ast_2 B_2 \fCenter \UD(\du\sim_2A_2 \uor \du\sim_2B_2) $
\LL{\scriptsize adj$\ast$}
\UIC{$\ast_2 \UD(\du\sim_2A_2 \uor \du\sim_2B_2)  \fCenter B_2$}
\BIC{$\ast_2 \UD(\du\sim_2A_2 \uor \du\sim_2B_2) \DAND \ast_2 \UD(\du\sim_2A_2 \uor \du\sim_2B_2)  \fCenter A_2 \dand B_2$}
\LL{\scriptsize C}
\UIC{$\ast_2 \UD(\du\sim_2A_2 \uor \du\sim_2B_2) \fCenter A_2 \dand B_2$}
\LL{\scriptsize adj$\ast$}
\UI$\ast_2(A_2 \dand B_2) \fCenter \UD(\du\sim_2A_2 \uor \du\sim_2B_2)$
\UI$\sim_2(A_2 \dand B_2) \fCenter \UD(\du\sim_2A_2 \uor \du\sim_2B_2)$
\LL{\scriptsize adj}
\UI$ \DU\sim_2(A_2 \dand B_2) \fCenter \du\sim_2A_2 \uor \du\sim_2B_2$
\UI$ \du\sim_2(A_2 \dand B_2) \fCenter \du\sim_2A_2 \uor \du\sim_2B_2$
\DP
\end{tabular}
 }
\end{center}


\begin{center}
{
\begin{tabular}{c}
\AX$A_2 \fCenter A_2 $
\LL{\scriptsize cont}
\UI$*_2A_2 \fCenter *_2 A_2$
\UI$\sim_2A_2 \fCenter *_2 A_2$
\RL{\scriptsize \DU}
\UI$\DU\sim_2A_2 \fCenter \DU*_2 A_2$
\UI$\du\sim_2A_2 \fCenter \DU*_2 A_2$
\RL{\scriptsize adj}
\UI$\UD\du\sim_2A_2 \fCenter *_2 A_2$
\RL{\scriptsize adj$\ast$}
\UI$A_2  \fCenter *_2\UD\du\sim_2A_2$
\LL{\scriptsize W}
\UI$A_2 \DAND B_2  \fCenter *_2\UD\du\sim_2A_2$
\UI$A_2 \dand B_2  \fCenter *_2\UD\du\sim_2A_2$
\RL{\scriptsize adj$\ast$}
\UI$\UD\du\sim_2A_2  \fCenter *_2(A_2 \dand B_2)$
\UI$\UD\du\sim_2A_2  \fCenter \sim_2(A_2 \dand B_2)$
\RL{\scriptsize adj}
\UI$\du\sim_2A_2  \fCenter \DU\sim_2(A_2 \dand B_2)$
\UI$\du\sim_2A_2  \fCenter \du\sim_2(A_2 \dand B_2)$
\AX$B_2 \fCenter B_2 $
\LL{\scriptsize cont}
\UI$*_2B_2 \fCenter *_2 B_2$
\UI$\sim_2B_2 \fCenter *_2 B_2$
\RL{\scriptsize \DU}
\UI$\DU\sim_2B_2 \fCenter \DU*_2 B_2$
\UI$\du\sim_2B_2 \fCenter \DU*_2 B_2$
\RL{\scriptsize adj}
\UI$\UD\du\sim_2B_2 \fCenter *_2 B_2$
\RL{\scriptsize adj$\ast$}
\UI$B_2  \fCenter *_2\UD\du\sim_2B_2$
\LL{\scriptsize W}
\UI$B_2 \DAND A_2  \fCenter *_2\UD\du\sim_2B_2$
\LL{\scriptsize E}
\UI$A_2 \DAND B_2  \fCenter *_2\UD\du\sim_2B_2$
\UI$A_2 \dand B_2  \fCenter *_2\UD\du\sim_2B_2$
\RL{\scriptsize adj$\ast$}
\UI$\UD\du\sim_2B_2  \fCenter *_2(A_2 \dand B_2)$
\UI$\UD\du\sim_2B_2  \fCenter \sim_2(A_2 \dand B_2)$
\RL{\scriptsize adj}
\UI$\du\sim_2B_2  \fCenter \DU\sim_2(A_2 \dand B_2)$
\UI$\du\sim_2B_2  \fCenter \du\sim_2(A_2 \dand B_2)$
\BI$\du\sim_2A_2 \uor \du\sim_2B_2 \fCenter \du\sim_2(A_2 \dand B_2) \UOR \du\sim_2(A_2 \dand B_2)$
\RL{\scriptsize C}
\UI$\du\sim_2A_2 \uor \du\sim_2B_2 \fCenter \du\sim_2(A_2 \dand B_2) $
\DP
\end{tabular}
 }
\end{center}

\end{proof}

\subsection*{Conservativity}

\begin{proposition}
\label{prop:consequence preserved and reflected}
For all $\mathcal{L}$-formulas $A$ and $B$,
\[
\mathrm{if} \,\, HBL \models t_1(A) \leq_1 t_1(B) \,\,\textrm{then} \,\, A \vDash_\mathsf{B} B.
 \]
\end{proposition}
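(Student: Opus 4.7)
The plan is to exploit the correspondence $\mathbb{B} \cong (\mathbb{B}^+)_+$ established in Propositions~\ref{prop:from single to multi} and~\ref{prop:reverse engineering}, and to transfer the assumed multi-type inequality down to the single-type bilattice semantics. Fix $\mathbb{B} \in \mathsf{B}$ and an assignment $v \colon \mathsf{AtProp} \to B$; by Proposition~\ref{prop:from single to multi}, $\mathbb{B}^+$ is an HBL whose two lattice components are both $(\mathrm{Reg}(\mathbb{B}), \band, \bor)$ and whose maps $\ud, \du$ are the identity. I will define a valuation $v^+$ on $\mathbb{B}^+$ by setting $v^+(p_1) := \mathrm{reg}(v(p))$ and $v^+(p_2) := \mathrm{reg}(\neg v(p))$ for each $p \in \mathsf{AtProp}$, so that $\langle v^+(p_1), v^+(p_2) \rangle = \pi(v(p))$, where $\pi$ is the representation isomorphism recalled above Definition~\ref{def:hb}.

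The central step is a translation lemma, proved by induction on the complexity of the $\mathcal{L}$-formula $A$:
\[
t_1(A)^{\mathbb{B}^+, v^+} = \mathrm{reg}(A^{\mathbb{B}, v}) \quad \text{and} \quad t_2(A)^{\mathbb{B}^+, v^+} = \mathrm{reg}(\neg A^{\mathbb{B}, v}),
\]
equivalently $\langle t_1(A)^{\mathbb{B}^+, v^+}, t_2(A)^{\mathbb{B}^+, v^+} \rangle = \pi(A^{\mathbb{B}, v})$. The atomic case is by construction of $v^+$; the constant cases are immediate from the translation clauses and from $\pi(\bt), \pi(\bfb), \pi(\top), \pi(\bot)$ as listed in Theorem~\ref{th:rep}; the cases of $\wedge, \vee, \band, \bor$ follow by matching the operations of $\mathbb{B}^+$ coordinate-wise with those of $\mathrm{Reg}(\mathbb{B}) \odot \mathrm{Reg}(\mathbb{B})$ given in Theorem~\ref{th:rep}; the negation case uses $\neg \langle a_1, a_2 \rangle = \langle a_2, a_1 \rangle$ together with $\du = \ud = \mathrm{Id}$ in $\mathbb{B}^+$, which is precisely what makes the clauses $t_1(\neg A) = \du\, t_2(A)$ and $t_2(\neg A) = \ud\, t_1(A)$ correspond to the coordinate swap.

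With the lemma in hand the conservativity argument is quick. Suppose $A^{\mathbb{B}, v} \in F_\bt$, i.e., $\bt \leq_k A^{\mathbb{B}, v}$. Since $\pi(\bt) = \langle 1, 0 \rangle$ and $\leq_k$ is componentwise in the product representation (Theorem~\ref{th:rep}), the first coordinate of $\pi(A^{\mathbb{B}, v})$ is forced to be $1_1$; by the lemma, $t_1(A)^{\mathbb{B}^+, v^+} = 1_1$. Instantiating the hypothesis $\mathrm{HBL} \models t_1(A) \leq_1 t_1(B)$ at $\mathbb{B}^+$ and $v^+$ yields $1_1 \leq_1 t_1(B)^{\mathbb{B}^+, v^+}$, hence $t_1(B)^{\mathbb{B}^+, v^+} = 1_1$. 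Transferring back via the lemma and the isomorphism $\pi$, the first coordinate of $B^{\mathbb{B}, v}$ equals $1_1$, while the second is trivially above $0$; therefore $\bt \leq_k B^{\mathbb{B}, v}$, i.e., $B^{\mathbb{B}, v} \in F_\bt$, as required.

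The main obstacle is the translation lemma, specifically its negation clause: the apparent asymmetry between $t_1(\neg A) = \du\, t_2(A)$ and $t_2(\neg A) = \ud\, t_1(A)$ has to mirror exactly the coordinate swap that $\pi$ implements on $\neg A$. This matching depends on the identifications $\du = \ud = \mathrm{Id}_{\mathrm{Reg}(\mathbb{B})}$ built into $\mathbb{B}^+$ by Proposition~\ref{prop:from single to multi}; once this clause is checked, the remaining inductive steps are routine verifications that the coordinate-wise formulas in Theorem~\ref{th:rep} agree with the $\mathbb{L}_1$- and $\mathbb{L}_2$-type operations prescribed by the translations $t_1$ and $t_2$.
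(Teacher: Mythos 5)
Your proof is correct and follows essentially the same route as the paper: both pass through Proposition~\ref{prop:from single to multi} to move from a bilattice $\mathbb{B}$ to its heterogeneous counterpart $\mathbb{B}^+$ and exploit the fact that $A^{\mathbb{B}}\in F_{\bt}$ corresponds exactly to $t_1(A)$ evaluating to $\utop$ there. The only differences are cosmetic: the paper argues by contraposition and leaves the translation lemma $\langle t_1(A)^{\mathbb{B}^+},\, t_2(A)^{\mathbb{B}^+}\rangle = \pi(A^{\mathbb{B}})$ implicit, whereas you argue directly and spell out that induction, which is a welcome filling-in of detail rather than a different method.
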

\begin{proof}
Assume that $A \nvDash_\mathsf{B} B$, then there exists a bilattice $\mathbb{B} \in \mathsf{B}$, such that $A^\mathsf{B} \in F_\mathsf{\bt}$ and $B^\mathsf{B} \notin F_\mathsf{\bt}$. By Proposition \ref{prop:from single to multi},  we have that there is an HBL $\mathbb{B}^+ = (\mathbb{L}_1,~\mathbb{L}_2,~\du,~\ud)$
, such that $t_1(A)^{\mathbb{L}_1} = \utop$ and $t_1(B)^{\mathbb{L}_1} \neq \utop$. Hence, $HBL \nvDash t_1(A) \leq_1 t_1(B)$. This argument also holds for HCBL.

\end{proof}

To argue that the calculus introduced in Section \ref{sec:disp} is conservative w.r.t.~BL (resp.~CBL), we follow the standard proof strategy discussed in \cite{GMPTZ,GKPLori}. 
Denote by  $\vdash_{\mathrm{BL}}$ (resp.~$\vdash_{\mathrm{CBL}}$) the 
consequence relation defined by the calculus for $\mathrm{BL}$ (resp.~$\mathrm{CBL}$)
introduced in Section~\ref{sec:prel}, and by $\models_{\mathrm{HBL}}$ (resp.~$\models_{\mathrm{HCBL}}$)  the semantic consequence relation arising from (perfect) HBL
(resp.~HCBL
). 
We need to show that, for all formulas $A$ and $B$ of the original language of BL (resp.~CBL), if $t_1(A) \vdash t_1 (B)$ is a D.BL-derivable (resp.~D.CBL-derivable) sequent, then  $A\vdash_{\mathrm{BL}} B$ (resp.~ $A\vdash_{\mathrm{CBL}} B$). This  can be proved using  the following facts: (a) the rules of D.BL (resp.~D.CBL) are sound w.r.t.~perfect HBL-algebras  (resp.~HCBL-algebras);  (b) $\mathrm{BL}$ (resp.~CBL-algebras) is complete w.r.t.~$\mathsf{B}$ (resp.~$\mathsf{CB}$); and (c) $\mathsf{B}$ (resp.~$\mathsf{CB}$) are equivalently presented as (perfect) HBL-algebras (resp.~cf.~HCBL-algebras, Section \ref{Heterogeneous presentation}), so that the semantic consequence relations arising from each type of structures preserve and reflect the translation (cf.~Propositions \ref{prop1} and \ref{prop2}). 
Let then $A, B$ be formulas of  the original $\mathrm{BL}$ (resp.~$\mathrm{CBL}$)-language. If  $t_1(A) \vdash t_1(B)$ is a D.BL (resp.~D.CBL)-derivable sequent, then, by (a),  $t_1(A) \models_{\mathrm{HBL}} t_1(B)$ (resp.~$t_1(A) \models_{\mathrm{HCBL}} t_1(B)$). By (c) and Proposition \ref{prop:consequence preserved and reflected}
, this implies that $A\models_{\mathsf{B}} B$ (resp.~$A\models_{\mathsf{CB}} B$). By (b), this implies that $A\vdash_{\mathrm{BL}} B$ (resp.~$A\vdash_{\mathrm{CBL}} B$), as required.

\subsection*{Subformula property and cut elimination}

Let us briefly sketch the proof of cut elimination and subformula property for D.BL (resp.~D.CBL). As discussed earlier on, proper display calculi have been designed so that the cut elimination and subformula property  can  be inferred from a meta-theorem, following the strategy introduced by Belnap for display calculi \cite{Belnap1982}. The meta-theorem to which we will appeal for D.BL (resp.~D.CBL) was proved in \cite{TrendsXIII}.

All conditions in \cite[Theorem 4.1]{TrendsXIII} except $\mathrm{C}'_8$ are readily seen to be satisfied by inspection of the rules. Condition $\mathrm{C}'_8$ requires to check that reduction steps are available for every application of the cut rule in which both cut-formulas are principal, which either remove the original cut altogether or replace it by one or more cuts on formulas of strictly lower complexity.  In what follows, we only show  $\mathrm{C}'_8$ for the unary connectives $\sim$ and $\ud$ (the proof for $\du$ is analogous). The cases of lattice connectives are standard and they are omitted.

\paragraph*{$\mathsf{L}_i$-type connectives}
\begin{center}

{
\begin{tabular}{ccc}
\bottomAlignProof
\AXC{\ \ \ $\vdots$ \raisebox{1mm}{$\pi_1$}}
\noLine
\UI$X_i \fCenter \ast_i A_i$
\UI$X_i \fCenter \sim_i A_i$

\AXC{\ \ \ $\vdots$ \raisebox{1mm}{$\pi_2$}}
\noLine
\UI$\ast_i A_i \fCenter Y_i$
\UI$\sim_i A_i  \fCenter Y_i$
\BI$X_i \fCenter Y_i$
\DisplayProof

 & $\rightsquigarrow$ &

\bottomAlignProof
\AXC{\ \ \ $\vdots$ \raisebox{1mm}{$\pi_2$}}
\noLine
\UI$\ast_i A_i \fCenter Y_i$
\UI$\ast_i Y_i \fCenter A_i$
\AXC{\ \ \ $\vdots$ \raisebox{1mm}{$\pi_1$}}
\noLine
\UI$X_i \fCenter \ast_i A_i$
\UI$A_i \fCenter \ast_i X_i$
\BI$\ast_i Y_i  \fCenter \ast_i X_i$
\LL {\scriptsize cont}
\UI$X_i \fCenter Y_i$
\DisplayProof
 \\
\end{tabular}
 }
\end{center}

\paragraph*{Multi-type connectives}

\begin{center}
{
\begin{tabular}{ccc}
\bottomAlignProof
\AXC{\ \ \ $\vdots$ \raisebox{1mm}{$\pi_1$}}
\noLine
\UIC{$X_2 \fCenter \UD A_1$}
\UIC{$X_2 \fCenter \ud A_1$}
\AXC{\ \ \ $\vdots$ \raisebox{1mm}{$\pi_2$}}
\noLine
\UIC{$\UD A_1 \fCenter Y_2$}
\UIC{$\ud A_1 \fCenter Y_2$}
\BIC{$X_2 \fCenter Y_2$}
\DisplayProof

 & $\rightsquigarrow$ &

\bottomAlignProof
\AXC{\ \ \ $\vdots$ \raisebox{1mm}{$\pi_1$}}
\noLine
\UIC{$X_2 \fCenter \UD A_1$}
\UIC{$\DU X_2 \fCenter A_1$}
\AXC{\ \ \ $\vdots$ \raisebox{1mm}{$\pi_2$}}
\noLine
\UIC{$\UD A_1 \fCenter Y_2$}
\UIC{$A_1 \fCenter \DU Y_2$}
\BIC{$\DU X_2 \fCenter \DU Y_2$}
\LL {\scriptsize \DU}
\UIC{$X_2 \fCenter Y_2$}
\DisplayProof
 \\
\end{tabular}
 }
\end{center}

\section{Conclusions and future work}
\label{sec:conc}

The modular character of proper multi-type display calculi makes it possible to easily extend our formalism 
in order to axiomatize axiomatic extensions (e.g.~the logic of \emph{classical bilattices with conflation}~\cite[Definition 2.11]{arieli1996reasoning}) as well as language expansions of the basic bilattice logics treated
in the present paper. Expansions of bilattice logic have been extensively studied in the literature as early as 
in~\cite{arieli1996reasoning}, which introduces an implication enjoying the deduction-detachment theorem 
(see also~\cite{bou2013bilattices}). More recently, modal operators have been added to bilattice logics, 
motivated by potential applications to computer science and in particular verification of programs 
\cite{jung2013kripke,rivieccio2017four}; as well as dynamic modalities, motivated by applications
in the area of dynamic epistemic logic \cite{rivieccio2014algebraic, rivieccio2014bilattice}.

Yet more recently, bilattices with a negation not necessarily satisfying the involution law 
($\neg \neg a = a $) have been introduced with motivations of domain theory and topological duality
(see \cite{jakl2016bitopology}), and the study of the corresponding logics has been started
\cite{Rivieccio}. These logics are weaker than the one considered in the present paper, and so
adapting our display calculus formalism to them might prove a more challenging task 
(in particular, the translations introduced in Section~\ref{sec:prop}
may need to be redefined, as they rely on the maps $p$ and $n$ being lattice isomorphisms, which is no longer true in the non-involutive case).

\bibliographystyle{plain}
\bibliography{BilatticeLogicProperlyDisplayed}

\end{document}